\newtheorem{theorem}{Theorem}[section]
\newtheorem{lemma}[theorem]{Lemma}
\newtheorem{proposition}[theorem]{Proposition}
\newcommand\RR{{\Bbb R}}
\newcommand\ZZ{{\Bbb Z}}
\newcommand\TT{{\Bbb T}}
\newcommand\diam{{\rm diam}}
\newcommand\supp{{\rm supp}}
\begin{document}
\title{Nearly Tight
Frames and Space-Frequency Analysis\\ on Compact Manifolds
}
\author{Daryl Geller\\
\footnotesize\texttt{{Department of Mathematics, Stony Brook University, Stony Brook, NY 11794-3651}}\\
\footnotesize\texttt{{daryl@math.sunysb.edu}}\\
 \thanks{This work was partially  supported by the Marie Curie Excellence Team Grant MEXT-CT-2004-013477, Acronym MAMEBIA.}
Azita Mayeli \\
\footnotesize\texttt{{Department of Mathematics, Stony Brook University, Stony Brook, NY 11794-3651}}\\
\footnotesize\texttt{{amayeli@math.sunysb.edu}}}

\maketitle

\begin{abstract}
Let $\bf M$ be a smooth compact oriented Riemannian manifold of dimension $n$ without boundary,
and let $\Delta$ be the Laplace-Beltrami operator on ${\bf M}$. Say $0 \neq f \in
\mathcal{S}(\RR^+)$, and that $f(0) = 0$.  For $t > 0$, let $K_t(x,y)$ denote
the kernel of $f(t^2 \Delta)$.  Suppose $f$ satisfies Daubechies' criterion, and $b > 0$.
For each $j$, write ${\bf M}$ as a disjoint union of measurable sets $E_{j,k}$ with diameter
at most $ba^j$, and measure  comparable to $(ba^j)^n$ if $ba^j$ is sufficiently small.
Take $x_{j,k} \in E_{j,k}$.  We then show that the functions
$\phi_{j,k}(x)=\mu(E_{j,k})^{1/2} \overline{K_{a^j}}(x_{j,k},x)$ form a frame for
$(I-P)L^2({\bf M})$, for $b$ sufficiently small (here $P$ is the projection onto the constant functions).
Moreover, we show that the ratio of the frame bounds approaches 1
nearly quadratically as the dilation parameter approaches 1, so that the frame quickly becomes nearly tight (for
$b$ sufficiently small).
Moreover, based upon how well-localized a function
$F \in (I-P)L^2$ is in space and in frequency, we can describe which terms in
the summation $F \sim SF = \sum_j \sum_k \langle  F,\phi_{j,k}  \rangle  \phi_{j,k}$
are so small that they can be neglected.  If $n=2$ and $\bf M$ is the torus or the sphere,
and $f(s)=se^{-s}$ (the ``Mexican hat'' situation), we obtain two explicit approximate formulas
for the $\phi_{j,k}$, one to be used when $t$ is large, and one to be used when $t$ is small.\\

\footnotesize{
\begin{tabular}{lrl}
 Keywords and phrases:  &  \multicolumn{2}{l} {\em Frames, Wavelets, Continuous Wavelets, Spectral Theory, Schwartz Functions,}\\
 &  \hspace{-.1cm}  {\em  Time-Frequency  Analysis,  Manifolds, Sphere, Torus,  Pseudodifferential Operators.}\\
 \end{tabular}
 \vspace{.3cm}

 \begin{tabular}{lrl}
  &&  \hspace{-1cm} AMS  Classification;  {42C40, 42B20, 58J40, 58J35, 35P05.}
\end{tabular}
}
 \end{abstract}
{\bf Table of Contents}
\begin{itemize}
\item
\textit{Section 1: Introduction }
\item
\textit{Section 2: Frames}
\item
\textit{Section 3: Needlets and Mexican Needlets on the sphere}
\end{itemize}
\section{\large{Introduction}}
\label{sec:Introduction }

Say $f_0 \in {\cal S}(\RR^+)$ (the space of restrictions to $\RR^+$ of functions in ${\mathcal S}(\RR)$).
Say $f_0 \not\equiv 0$, and let
\[f(s) = sf_0(s).\]
One then has
the {\em Calder\'on formula}: if $c \in (0,\infty)$ is defined by
\[c = \int_0^{\infty} |f(t)|^2 \frac{dt}{t} = \int_0^{\infty} t|f_0(t)|^2 dt,\]
then for all $s > 0$,

\begin{equation}
\label{cald}
\int_0^{\infty} |f(ts)|^2 \frac{dt}{t} = c < \infty.
\end{equation}

The even function $h(\xi) = f(\xi^2)$ is then in ${\mathcal S}(\RR)$ and satisfies

\begin{equation}
\label{cald1}
\int_0^{\infty} |h(t\xi)|^2 \frac{dt}{t} = \frac{c}{2} < \infty.
\end{equation}

(In fact, all even functions in
${\mathcal S}(\RR)$ satisfying (\ref{cald1}) arise in this manner).

Its inverse Fourier transform $\check{h}$
is admissible (i.e. is a continuous wavelet).
 We prefer to write, formally,
\[ \check{h} = f(-d^2/dx^2) \delta; \]
the formal justfication being that

\[ (f(-d^2/dx^2) \delta)\hat{\:} = f(\xi^2) = h(\xi). \]
Thus $f(-d^2/dx^2) \delta$ is a continuous wavelet on $\RR$.

Discretizing (\ref{cald}), if $a \neq 1$ is sufficiently close to $1$,
one obtains a special form of
{\em Daubechies' condition}:
for all $s > 0$,
\begin{equation}
\label{daub}
0 < A_a \leq \sum_{j=-\infty}^{\infty} |f(a^{2j} s)|^2 \leq B_a < \infty,
\end{equation}
where
\begin{align}
\label{daubest}
A_a =& \frac{c}{2|\log a|} \left(1 - O(|(a-1)^2 (\log|a-1|)|\right), \\\label{daubest2} B_a=&
\frac{c}{2|\log a|}\left(1 + O(|(a-1)^2 (\log|a-1|)|)\right).
\end{align}
((\ref{daubest})  and (\ref{daubest2})  were proved in \cite{gm1}, Lemma
7.6, if $a > 1$.  If $a < 1$, replace $a$ by $1/a$ in that lemma.)
In particular, $B_a/A_a$ converges nearly quadratically to $1$ as $a \rightarrow 1$.  For example,
Daubechies calculated that if $f(s) = se^{-s}$ and $a=2^{1/3}$, then $B_a/A_a = 1.0000$  to four
significant digits.

Calder\'on's formula and Daubechies' condition are very important in the construction
of continuous wavelets and frames on $\RR$.

Our program is to construct (nearly tight) frames, and analogues of continuous wavelets, on
much more general spaces, by replacing the positive number $s$ in (\ref{cald}) and
(\ref{daub}) by a positive self-adjoint operator $T$ on a Hilbert space ${\cal H}$.
If $P$ is the projection onto the null space of $T$, by the spectral theorem
we obtain the relations

\begin{equation}
\label{gelmay1}
\int_0^{\infty} |f|^2(tT) \frac{dt}{t} = c(I-P)
\end{equation}

and

\begin{equation}
\label{gelmay2}
A_a(I-P) \leq \sum_{j=-\infty}^{\infty} |f|^2(a^{2j} T) \leq
B_a(I-P).
\end{equation}

(The integral in (\ref{gelmay1}) and the sum in (\ref{gelmay2}) converge strongly.
In (\ref{gelmay2}), $\sum_{j=-\infty}^{\infty} :=
\lim_{M, N \rightarrow \infty} \sum_{j=-M}^N$, taken in the strong operator
topology.)
(\ref{gelmay1}) and (\ref{gelmay2}) were justified in Section 2 of our earlier article
\cite{gmcw}.  (This article should be regarded as a sequel of \cite{gmcw}, although, if one is
willing to take some facts (including (\ref{gelmay1}) and (\ref{gelmay2})) for granted, it can be
read independently of \cite{gmcw}.)

Taking $T$ to be $-d^2/dx^2$ on $\RR$ leads to the continuous wavelet $f(-d^2/dx^2) \delta$ on
$\RR$. (Of course, on $\RR$, $P = 0$.)

We began our program of looking at more general positive self-adjoint operators $T$,
in our article \cite{gm1}.  There we took $T$ to be the sublaplacian $L$ on $L^2(G)$,
where $G$ is a stratifed Lie
group, and thereby obtained continuous wavelets and frames on such $G$.

In this article we will look at the (much more practical!)
situation in which $T$ is the Laplace-Beltrami operator on $L^2({\bf M})$,
where $({\bf M},g)$ is a smooth compact oriented Riemannian
manifold without boundary, of dimension $n$.  We will construct nearly tight frames
in this context.  Now $P$ will be the projection onto the
one-dimensional space of constant functions.
We constructed continuous wavelets on ${\bf M}$ in \cite{gmcw}.

We discussed the history of continuous wavelets and frames on manifolds at the end
of the introduction of \cite{gmcw}, and for space considerations, will not repeat it here.
In the last section of this article, we will present a detailed comparison of our
methods with the methods of Narcowich, Petrushev and Ward \cite{narc1}, \cite{narc2},
who worked on the sphere; we will explain the similarities and differences between their
methods and ours.\\

To see how frames can be obtained from (\ref{gelmay2}), suppose that, for any $t > 0$,
$K_t$ is the Schwartz kernel of $f(t^2T)$.  Thus, if $F \in L^2({\bf M})$,

\begin{equation}
\label{schkerf}
[f(t^2T)F](x) = \int_{\bf M} F(y) K_t(x,y) d\mu(y),
\end{equation}
here $\mu$ is the measure on ${\bf M}$ arising from integration with respect to the
volume form on ${\bf M}$.
Say now that $\int_{\bf M} F = 0$, so that $F = (I-P)F$.
By (\ref{gelmay2}),
\begin{equation}
\label{aasumba0}
A_a \langle F,F \rangle  \leq  \langle \sum_j |f|^2(a^{2j} T)F, F \rangle  \leq B_a \langle F,F \rangle .
\end{equation}
Thus
\begin{equation}
\label{aasumba}
A_a \langle F,F \rangle  \leq \sum_j  \langle f(a^{2j} T)F, f(a^{2j} T)F \rangle  \leq B_a \langle F,F \rangle ,
\end{equation}
so that
\begin{equation}
\label{kerfaj}
A_a \langle F,F \rangle  \leq \sum_j \int\left|\int K_{a^j}(x,y) F(y)d\mu(y)\right|^2 d\mu(x) \leq B_a \langle F,F \rangle
\end{equation}
Now, pick $b > 0$, and for each $j$, write ${\bf M}$
as a disjoint union of measurable sets $E_{j,k}$ with diameter at most $ba^j$.
Take $x_{j,k} \in E_{j,k}$.
It is then reasonable to expect that, for any $\epsilon > 0$, if
$b$ is sufficiently small, and if $x_{j,k} \in E_{j,k}$, then
\begin{equation}
\label{kerfajap}
(A_a-\epsilon) \langle F,F \rangle  \leq
\sum_j \sum_{k}\left|\int K_{a^j}(x_{j,k},y) F(y)d\mu(y)\right|^2 \mu(E_{j,k}) \leq (B_a + \epsilon)  \langle F,F \rangle ,
\end{equation}
which means
\begin{equation}
\label{mtvest}
(A_a-\epsilon) \langle F,F \rangle  \leq
\sum_j \sum_{k} |(F,\phi_{j,k})|^2 \leq (B_a + \epsilon)  \langle F,F \rangle ,
\end{equation}
where
\begin{equation}
\label{phjkdf}
\phi_{j,k}(x)= \mu(E_{j,k})^{1/2}~ \overline{K_{a^j}}(x_{j,k},x),
\end{equation}
or, formally, from (\ref{schkerf}), (since $\overline{f}(a^{2j}T) = f(a^{2j}T)^\ast$),
\[ \phi_{j,k}(x)=  \mu(E_{j,k})^{1/2} \left[\overline{f}(a^{2j}T)\delta_{x_{j,k}}\right](x).\]

We will show that (\ref{mtvest}) indeed holds,
provided the $E_{j,k}$ are also ``not too small'' (precisely, if they satisfy (\ref{measgeq0}) directly below).
In fact, in Theorem \ref{framain}, we shall show (a more general form of) the following result:

\begin{theorem}
\label{framain0}
Fix $a >1$, and say $c_0 , \delta_0 > 0$.  Suppose $f \in {\mathcal S}(\RR^+)$,
and $f(0) = 0$.  Suppose that the Daubechies condition $(\ref{daub})$ holds.
Then there exists a constant $C_0 > 0$ $($depending only on ${\bf M}, f, a, c_0$ and $\delta_0$$)$
as follows:\\
For $t > 0$, let $K_t$ be the kernel of $f(t^2\Delta)$.
Say $0 < b < 1$.
Suppose that, for each $j \in \ZZ$, we can write ${\bf M}$ as a finite disjoint union of measurable sets
$\{E_{j,k}: 1 \leq k \leq N_j\}$,
where:
\begin{equation}
 \label{diamleq0}
\mbox{the diameter of each } E_{j,k} \mbox{ is less than or equal to } ba^j,
\end{equation}
and where:
\begin{equation}
 \label{measgeq0}
\mbox{for each } j \mbox{ with } ba^j < \delta_0,\: \mu(E_{j,k}) \geq c_0(ba^j)^n.
\end{equation}
$($Such $E_{j,k}$ exist
provided $c_0$ and $\delta_0$ are sufficiently small, independent of the
values of $a$ and $b$.$)$

For $1 \leq k \leq N_j$, define $\phi_{j,k}$ by (\ref{phjkdf}).  Then
if $P$ denotes the projection in $L^2({\bf M})$ onto the space of constants, we have
\[(A_a-C_0b) \langle F,F \rangle  \leq
\sum_j \sum_{k} |(F,\phi_{j,k})|^2 \leq (B_a + C_0b)  \langle F,F \rangle ,\]
for all $F \in (I-P)L^2({\bf M})$.  In particular, if $A_a - C_0b > 0$, then
$\left\{\phi_{j,k}\right\}$ is a frame for $(I-P)L^2({\bf M})$,
with frame bounds $A_a - C_0b$ and $B_a + C_0b$.
\end{theorem}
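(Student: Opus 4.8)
The plan is to compare the discrete quadratic form defining $S$ with the continuous square function, whose size is pinned down by (\ref{kerfaj}). Put $G_j := f(a^{2j}\Delta)F$, so that $G_j(x)=\int_{\bf M}K_{a^j}(x,y)F(y)\,d\mu(y)$ and, by (\ref{phjkdf}), $(F,\phi_{j,k})=\mu(E_{j,k})^{1/2}G_j(x_{j,k})$. Since $\sum_k\int_{E_{j,k}}|G_j(x)|^2\,d\mu(x)=\|G_j\|_{L^2}^2$, we have
\[ \sum_j\sum_k|(F,\phi_{j,k})|^2 \;=\; \sum_j\|G_j\|_{L^2}^2 \;-\; \sum_j D_j, \qquad D_j:=\sum_k\int_{E_{j,k}}\bigl(|G_j(x)|^2-|G_j(x_{j,k})|^2\bigr)\,d\mu(x). \]
By (\ref{kerfaj}), $A_a\|F\|^2\le\sum_j\|G_j\|_{L^2}^2\le B_a\|F\|^2$, so it suffices to prove $\bigl|\sum_j D_j\bigr|\le C_0\,b\,\|F\|^2$ for all $F\in(I-P)L^2({\bf M})$; the stated frame bounds then follow at once. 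It is enough to prove this for $F$ in a dense subspace (say smooth $F$), since every partial sum $\sum_{|j|\le M}\sum_k|(F,\phi_{j,k})|^2$ depends continuously on $F$.

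The first step is to write $D_j=D_j'+D_j''$, where $D_j''=e_j:=\sum_k\int_{E_{j,k}}|G_j(x)-G_j(x_{j,k})|^2\,d\mu(x)\ge 0$ and $D_j'=2\,\mathrm{Re}\sum_k\overline{G_j(x_{j,k})}\int_{E_{j,k}}\bigl(G_j(x)-G_j(x_{j,k})\bigr)\,d\mu(x)$. Cauchy--Schwarz over $k$ (using $\mu(E_{j,k})^{-1}\bigl|\int_{E_{j,k}}(G_j-G_j(x_{j,k}))\bigr|^2\le\int_{E_{j,k}}|G_j-G_j(x_{j,k})|^2$) gives $|D_j'|\le 2\,a_j^{1/2}e_j^{1/2}$ with $a_j:=\sum_k\mu(E_{j,k})|G_j(x_{j,k})|^2$. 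From $a_j=\|G_j\|_{L^2}^2-D_j$ one then gets $a_j\le\|G_j\|_{L^2}^2+e_j+2a_j^{1/2}e_j^{1/2}$, hence $a_j\le 2\|G_j\|_{L^2}^2+6e_j$, and therefore $|D_j|\le|D_j'|+e_j\le C\|G_j\|_{L^2}e_j^{1/2}+Ce_j$. Summing in $j$, Cauchy--Schwarz together with (\ref{kerfaj}) yields
\[ \Bigl|\sum_j D_j\Bigr| \;\le\; C\sqrt{B_a}\,\Bigl(\sum_j e_j\Bigr)^{1/2}\|F\| \;+\; C\sum_j e_j . \]
Since $b\le 1$, everything reduces to the single estimate $\sum_j e_j\le C_1\,b^2\,\|F\|^2$.

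To prove this, note $e_j=\|\mathcal L_j F\|_{L^2({\bf M})}^2$, where $\mathcal L_j$ is the integral operator with kernel $L_j(x,y):=K_{a^j}(x,y)-K_{a^j}(c_j(x),y)$ and $c_j(x):=x_{j,k}$ when $x\in E_{j,k}$; hence $\sum_j e_j=\|\mathcal L F\|^2_{\ell^2(\ZZ;L^2)}$ for $\mathcal L F:=(\mathcal L_j F)_j$, and it suffices to bound $\|\mathcal L\|^2=\|\mathcal L\mathcal L^*\|=\|(\mathcal L_j\mathcal L_k^*)_{j,k}\|$. I would use the Schwartz-kernel estimates of \cite{gmcw} — namely $|\nabla_x^\alpha\nabla_y^\beta K_t(x,y)|\le C_{N,\alpha,\beta}\,t^{-n-|\alpha|-|\beta|}\bigl(1+d(x,y)/t\bigr)^{-N}$ for all $N,\alpha,\beta$ — together with the mean-value inequality along minimizing geodesics and $\mathrm{diam}(E_{j,k})\le ba^j\Rightarrow d(x,c_j(x))\le ba^j$, to obtain
\[ |L_j(x,y)|\le C_N\,b\,(a^j)^{-n}\bigl(1+d(x,y)/a^j\bigr)^{-N}, \qquad |\nabla_y L_j(x,y)|\le C_N\,b\,(a^j)^{-n-1}\bigl(1+d(x,y)/a^j\bigr)^{-N}. \]
Schur's test (with $\int_{\bf M}(1+d(x,y)/a^j)^{-N}\,d\mu(y)\le C_N(a^j)^n$) gives $\|\mathcal L_j\|\le C_N b$, so $\|\mathcal L_j\mathcal L_j^*\|\le Cb^2$. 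For $j\ne k$, say $j\le k$, I would use the cancellation $\int_{\bf M}L_j(x,z)\,d\mu(z)=[f(a^{2j}\Delta)\mathbf 1](x)=f(0)=0$ (this is where $f(0)=0$ enters) to write $(\mathcal L_j\mathcal L_k^*)(x,y)=\int_{\bf M}L_j(x,z)\,\overline{\bigl(L_k(y,z)-L_k(y,x)\bigr)}\,d\mu(z)$; estimating $L_k(y,z)-L_k(y,x)$ through the $\nabla_y L_k$ bound and carrying out the $z$-integration yields $|(\mathcal L_j\mathcal L_k^*)(x,y)|\le C_N\,b^2\,a^{-|j-k|}(a^k)^{-n}\bigl(1+d(x,y)/a^k\bigr)^{-N}$, hence $\|\mathcal L_j\mathcal L_k^*\|\le C\,b^2\,a^{-|j-k|}$ by Schur's test. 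A Cotlar--Stein argument (equivalently, Schur's test applied to the operator matrix) then gives $\|\mathcal L\mathcal L^*\|\le\sup_j\sum_k\|\mathcal L_j\mathcal L_k^*\|\le Cb^2\sum_{m\in\ZZ}a^{-|m|}=C_1 b^2$. (Scales with $|j|$ large need no separate treatment: there $f(a^{2j}\Delta)(I-P)$ is $O(a^{-jN})$ in every relevant norm, so the above holds a fortiori; the lower volume bound (\ref{measgeq0}) serves only to keep the weights $\mu(E_{j,k})$ from degenerating.)

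The main obstacle is this almost-orthogonality estimate $\|\mathcal L_j\mathcal L_k^*\|\le Cb^2 a^{-\varepsilon|j-k|}$, valid uniformly in $j,k$: one must extract the gain $b$ from $d(x,c_j(x))\le ba^j$ \emph{twice} — once in $|L_j|$ and once in $|\nabla_y L_k|$ — since only the resulting $b^2$ makes the final error linear rather than $O(b^{1/2})$ in $b$ (this is what forces the use of the mixed-derivative kernel estimate), while simultaneously extracting the gain $a^{-\varepsilon|j-k|}$ from the separation of frequency scales, which rests on the mean-zero property $\int_{\bf M}K_t(x,\cdot)\,d\mu=0$. The remaining work — tracking the polynomial weights through the $z$-integration, uniformly in whether $a^j$ is small, comparable to, or larger than $\mathrm{diam}({\bf M})$, and being careful near the diagonal — is routine given the kernel estimates of \cite{gmcw}.
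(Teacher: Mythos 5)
Your argument is correct, but it follows a genuinely different route from the paper's. The paper first proves a general uniform $L^2$-boundedness theorem for ``summation operators'' $\sum_{j,k}\mu(E_{j,k})\langle\cdot,\varphi_{j,k}\rangle\psi_{j,k}$ via the David--Journ\'e $T(1)$ theorem (Theorem~\ref{sumopthm}); it then writes the discretization error, in local coordinates, as $b\int_0^1\frac{\partial}{\partial s}|\langle\Phi_{x_{j,k}+(ba^j/c_1)sw,a^j},F\rangle|^2\,ds$, so that the factor $b$ appears explicitly and the integrand is again recognized as $\langle S'F,F\rangle$ for summation operators $S'$ built out of first $x$-derivatives of $K_t$ (which are still molecules by Lemma~\ref{manmol}). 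The lower volume bound (\ref{measgeq0}) enters there only to control the Jacobian factor $G_{j,k}(w)=(ba^j)^n/(c_1^n\mu(E_{j,k}))\,H_{j,k}(w)$ after the change of variables, and the scales $ba^j\ge\delta_0$ are handled separately using the rapid $C^\infty$-decay of $K_t$ as $t\to\infty$. You instead split the per-scale error $D_j$ into a nonnegative square piece $e_j$ and a cross term controlled by $\sqrt{a_j e_j}$, reduce everything \emph{algebraically} to the square-function bound $\sum_j e_j=\|\mathcal LF\|^2\le C b^2\|F\|^2$, and then prove that bound by almost-orthogonality (Cotlar--Stein/Schur on the operator matrix $\mathcal L_j\mathcal L_k^*$), extracting the factor $b$ from $d(x,c_j(x))\le ba^j$ via the mean-value inequality and the off-diagonal decay $a^{-\varepsilon|j-k|}$ from the cancellation $\int K_t(x,\cdot)\,d\mu=0$ (i.e.\ $f(0)=0$). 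Both arguments ultimately run on the same kernel estimates of Lemma~\ref{manmol}; what differs is the bookkeeping. Your route is more elementary and self-contained in that it bypasses the $T(1)$ theorem entirely and, as you note in passing, does not actually use the lower volume bound (\ref{measgeq0}) at all (the cross term is absorbed algebraically rather than via a normalized change of variables), so your constant $C_0$ depends only on $\mathbf M,f,a$. What the paper's route buys is reuse: Theorem~\ref{sumopthm} is a workhorse used again later (e.g.\ in the frequency analysis of Theorem~\ref{freqthm}), so having it in hand makes the FTC manipulation nearly immediate, whereas your Cotlar--Stein estimate is bespoke to this lemma. One small caution on the density reduction you wave at: the lower frame bound for general $F\in(I-P)L^2$ does not follow from continuity of finite partial sums alone; one should first pass the \emph{upper} bound to the limit (Fatou over increasing index sets), conclude that the analysis operator is bounded, and only then deduce the lower bound by continuity. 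This is standard, but worth saying.
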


Thus, in these circumstances, if $b$ is sufficiently small,
$\{\phi_{j,k}\}$ is a frame, in fact a nearly tight frame, since
\[ \frac{B_a + \epsilon}{A_a - \epsilon} \sim \frac{B_a}{A_a} =
1 + O\left(|(a-1)^2 (\log|a-1|)|\right). \]

Most of the work in this article will be in justifying the passage from
(\ref{kerfaj}) to (\ref{kerfajap}).  We will shortly sketch how this is done.
First, however, we should explain in what sense the $\phi_{j,k}$
look like a {\em wavelet} frame.  We recall that, on the real line, wavelets
are obtained through dyadic translations and dilations from one fixed function.

Let us first, then, explain in what sense the passage from one $j$ to another
is analogous to dilation.  For this, we need to understand very precisely the
behavior of $K_t(x,y)$ near the diagonal.

If we were on $\RR^n$, $K_t(x,y)$, the kernel of $f(t^2\Delta)$, would be of the
form $t^{-n} \psi((x-y)/t)$ for some $\psi \in \mathcal{S}$.
(Here $\hat{\psi} = G$, where $G(\xi) = f(\xi^2)$.)
For any $N, \alpha, \beta$,
there would thus exist $C_{N, \alpha, \beta}$ such that
\begin{equation}\notag
t^{n+|\alpha|+|\beta|} \left|\left(\frac{x-y}{t}\right)^N \partial_x^{\alpha} \partial_y^{\beta} K_t(x,y)\right|
\leq C_{N, \alpha, \beta}
\end{equation}
for all $t, x, y$.

In Lemma \ref{manmolold} of \cite{gmcw}, we showed that one has similar estimates on ${\bf M}$,
so that, in effect, $K_t(x,y)$ ``behaves like''
$t^{-n} \psi((x-y)/t)$ near the diagonal $D$ (for some $\psi$).  Precisely, we
showed:
\begin{lemma}
\label{manmol}
For every pair of
$C^{\infty}$ differential operators $X$ $($in $x)$ and $Y$  $($in $y)$ on ${\bf M}$,
and for every integer $N \geq 0$, there exists $C_{N,X,Y}$ as follows.  Suppose
$\deg X = j$ and $\deg Y = k$.  Then
\begin{equation}
\label{diagest}
t^{n+j+k} \left|\left(\frac{d(x,y)}{t}\right)^N XYK_t(x,y)\right| \leq C_{N,X,Y}
\end{equation}
for all $t > 0$ and all $x,y \in {\bf M}$.
\end{lemma}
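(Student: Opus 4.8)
The plan is to realise $f(t^2\Delta)$ through the wave group, and to treat the region away from the diagonal by finite propagation speed and the region near the diagonal by a parametrix depending on the parameter $t$ which, after rescaling distances by $1/t$, has a Schwartz kernel uniformly in $t$. Set $G(\xi)=f(\xi^2)$; since $f\in\mathcal{S}(\RR^+)$, $G$ is even and lies in $\mathcal{S}(\RR)$, and by the spectral theorem and Fourier inversion
\[
f(t^2\Delta)=G(t\sqrt\Delta)=\frac1\pi\int_0^\infty\hat G(s)\,\cos\!\bigl(st\sqrt\Delta\,\bigr)\,ds,
\]
the integral converging strongly since $\hat G\in L^1(\RR)$. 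Two ingredients recur: finite propagation speed, i.e.\ the kernel of $\cos(\sigma\sqrt\Delta)$ is supported in $\{(x,y):d(x,y)\le|\sigma|\}$; and the fact that, ${\bf M}$ being compact, a Hilbert--Schmidt estimate together with the Sobolev embedding $H^p({\bf M})\hookrightarrow C^\ell({\bf M})$ reduces pointwise bounds on $XYK_t$, for $X,Y$ of any fixed degrees, to control of tails of $\sum_k(1+\lambda_k)^{Q}|f(t^2\lambda_k)|^2$, where $\{\lambda_k\}$ are the eigenvalues of $\Delta$. Since $f$ is rapidly decreasing, that tail is $O(t^{-R})$ for every $R$ as $t\to\infty$; this super-polynomial decay of $\|XYK_t\|_\infty$, together with the joint smoothness of $(t,x,y)\mapsto XYK_t(x,y)$ on $(0,\infty)\times{\bf M}\times{\bf M}$ and compactness, disposes of the range $t\ge1$. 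So I may assume $0<t\le1$, and I split according to whether $d(x,y)\ge\delta_0$ or $d(x,y)<\delta_0$, for a fixed $\delta_0$ below the injectivity radius of ${\bf M}$.

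In the regime $0<t\le1$, $d(x_0,y_0)\ge\delta_0$: choose $\varphi,\psi\in C^\infty({\bf M})$ with $\varphi\equiv1$ near $x_0$, $\psi\equiv1$ near $y_0$, and $d(\supp\varphi,\supp\psi)\ge\delta_0/2$. In
\[
M_\varphi G(t\sqrt\Delta)M_\psi=\frac1\pi\int_0^\infty\hat G(s)\,M_\varphi\cos\!\bigl(st\sqrt\Delta\,\bigr)M_\psi\,ds
\]
finite propagation speed annihilates the integrand for $st<\delta_0/2$, while $\|M_\varphi\cos(st\sqrt\Delta)M_\psi\|_{L^2\to L^2}\le1$; after absorbing the finitely many powers of $I+\Delta$ needed for Sobolev embedding into $G$ — the commutators produced when $X,Y$ are moved onto the cutoffs being harmless, since they are local and supported where $\varphi,\psi\equiv1$ — the rapid decay of $\hat G$ yields $\|M_\varphi f(t^2\Delta)M_\psi\|_{H^{-p}\to H^q}\le C_M(t/\delta_0)^M$ for every $M$. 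By Sobolev embedding this gives $|XYK_t(x_0,y_0)|\le C_M t^M$ for every $M$, and since $d(x_0,y_0)$ is bounded above and below in this regime, (\ref{diagest}) follows.

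In the regime $0<t\le1$, $d(x,y)<\delta_0$: work in geodesic normal coordinates on a ball $B(p,\delta_0)$, where $\Delta$ is a smooth second-order elliptic operator equal to the Euclidean Laplacian at the centre, with all data controlled uniformly over $p\in{\bf M}$ by compactness. Using finite propagation speed to confine the wave evolution for $|s|<\delta_0/2$ to the chart, the part of $f(t^2\Delta)$ coming from bounded $|s|$ agrees, near $p$, with a convenient globalized model operator on $\RR^n$ — the discarded part being $O(t^N)$ for every $N$, as above — and this model is a pseudodifferential operator with small parameter $t$: its symbol is $f(t^2|\xi|_g^2)$ plus lower-order terms, each of which, after the rescaling $\xi\mapsto\eta/t$, lies in a bounded subset of $\mathcal{S}(\RR^n_\eta)$ depending smoothly on the base point, because every order-lowering $\xi$-derivative in the symbol expansion is compensated by a power of $t$ and by a derivative of $f$, which is again Schwartz. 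Hence, in the chart, $K_t(x,y)=t^{-n}\Psi_t\!\left(\tfrac{x-y}{t},x\right)$ with $\{\Psi_t\}_{0<t\le1}$ bounded in $\mathcal{S}(\RR^n)$; applying $X$ of degree $j$ in $x$ and $Y$ of degree $k$ in $y$ lets at most $j$, resp.\ $k$, derivatives reach $\Psi_t$, each costing a factor $t^{-1}$, so that $|XYK_t(x,y)|\le C_{X,Y,N}\,t^{-n-j-k}(1+|x-y|/t)^{-N}$, which is (\ref{diagest}) because $|x-y|$ is comparable to $d(x,y)$ in normal coordinates. Covering ${\bf M}\times{\bf M}$ by finitely many such charts together with the far region completes the argument.

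The soft ingredients — finite propagation speed, the Schwartz decay of $f$, Sobolev embedding — are routine; the real obstacle is the uniform-in-$t$ parametrix near the diagonal, namely the assertion that $f(t^2\Delta)$ has a symbol which becomes a bounded family of Schwartz symbols under $\xi\mapsto\eta/t$, with all remainders controlled uniformly as $t\to0^+$. This is exactly where the compactness of $({\bf M},g)$ is used — uniform bounds on the metric and its derivatives in normal coordinates, and a uniform positive lower bound on the injectivity radius — and it is carried out by the semiclassical pseudodifferential calculus for functions of an elliptic operator, the Schwartz hypothesis on $f$ guaranteeing the required $\xi$-decay of every term and remainder.
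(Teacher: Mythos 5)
This lemma is not reproved in the present paper: the authors simply cite it as Lemma~4.1 of \cite{gmcw}, so there is no proof here to compare against line by line. That said, your sketch follows one of the standard routes to such kernel estimates and is sound in outline, and the argument in \cite{gmcw} is likewise $\Psi$DO-theoretic, so the substance is the same. Two remarks. First, the $t\ge1$ regime silently uses the standing hypothesis $f(0)=0$ (otherwise the $\lambda_0=0$ term of the eigenfunction expansion of $K_t$ is the nonzero constant $f(0)/\mu({\bf M})$, and $t^{n+j+k}|XYK_t|$ does not stay bounded when $N\le n+j+k$); you should say so, since as extracted the lemma does not repeat this hypothesis. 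Second, and more importantly, you correctly identify the crux — that in geodesic normal coordinates $f(t^2\Delta)$ has a symbol which, after $\xi\mapsto\eta/t$, lies in a bounded subset of $\mathcal{S}(\RR^n_\eta)$ depending smoothly on the centre, uniformly over $0<t\le1$ and over the compact manifold — but you treat it as a black box, appealing to ``the semiclassical pseudodifferential calculus for functions of an elliptic operator.'' To upgrade the sketch to a proof one must either actually run the symbolic iteration for $G(t\sqrt\Delta)$ and verify the remainder bounds uniformly in $t$ and in the base point, or cite a precise reference (e.g.\ a Strichartz/Taylor-type functional calculus result) together with the supplementary bookkeeping that the scaling parameter imposes; that bookkeeping is exactly the content of the cited Lemma~4.1, and is the part that genuinely needs the compactness and uniform injectivity radius you mention. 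Your finite-propagation-speed treatment of the off-diagonal region and your derivative count near the diagonal (derivatives in $x$ or $y$ that land on the rescaled slot each costing $t^{-1}$, those landing on the base-point slot costing nothing) are both correct.
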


(Here $d$ is geodesic distance on ${\bf M}$.)
In this sense, then, by (\ref{phjkdf}), passage from one $j$ to another (for the $\phi_{j,k}$) is analogous
to dilation.

It remains to discuss if there is any sense in which the passage from one $k$ to
another (for fixed $j$) is analogous to translation.  A general manifold ${\bf M}$ has nothing
akin to translations, but in section 6 of \cite{gmcw}, we discussed
the situation in which ${\bf M}$ has a
transitive group $G$ of smooth metric isometries, and in that case there is a satisfying result.
(Manifolds with such a group $G$ are usually called {\em homogeneous}.  Obvious examples are the sphere and the torus.)
In \cite{gmcw}, we observed that $K_t(Tx,Ty) = K_t(x,y)$ for all $T \in G$ and $x,y \in {\bf M}$.
Using (\ref{phjkdf}), let us write
$\phi_{j,k}= \mu(E_{j,k})]^{1/2} \varphi_{j,k}$
where $\varphi_{j,k}(x) =  \overline{K_{a^j}}(x_{j,k},x)$.
Then,
for any fixed $j$, we see at once that one can obtain all of the $\varphi_{j,k}$
by applying elements
of the group $G$ to any one of them. For example, on the sphere, for any fixed
$j$, all of the $\varphi_{j,k}$ are rotates of each other.  This is
a natural analogue for ${\bf M}$ of the usual requirement on $\RR^n$ that all
elements of the frame at a particular scale $j$ be translates of each other.  \\
\ \\
Besides its pleasant and suggestive appearance, Lemma \ref{manmol} (as stated above)
is actually crucial, in enabling us to rigorously pass from
(\ref{kerfaj}) to (\ref{kerfajap}).  Let us explain why.  We fix a finite set
${\mathcal P}$ of real $C^{\infty}$ vector fields on ${\bf M}$, whose
elements span the tangent space at each point.  We let
${\mathcal P}_0 = {\mathcal P} \cup \{\mbox{ the identity map}\}$.

Let $C_0^{\infty}({\bf M})$ equal the set of smooth $\varphi$ on ${\bf M}$ with
integral zero.   For $x \in {\bf M}$ and $t > 0$, we define
$$ {\mathcal M}_{x,t} = \left\{\varphi \in C_0^{\infty}({\bf M}):
t^{n+\deg Y} \left|\left(\frac{d(x,y)}{t}\right)^N Y\varphi(y)\right|\leq 1
\mbox{ whenever } y \in {\bf M}, \: 0 \leq N \leq n+2 \mbox{ and }
Y \in {\mathcal P}_0\right\}.$$

For example, if we define $\varphi^t_x(y) = K_t(x,y)$, then
by Lemma \ref{manmol},
there exists $C > 0$ such that $\varphi^t_x$ is in $C{\mathcal M}_{x,t}$
for all $x \in {\bf M}$ and all $t > 0$.
(The space ${\mathcal M}_{x,t}$ is a variant of
a space of molecules, as defined earlier in \cite{gil} and \cite{han2}.)

Our main technical tool will be the following fact.   For each $j \in \ZZ$,
we write ${\bf M}$ as a finite disjoint union of sets $E_{j,k}$ of diameter at most $a^j$,
and we choose points $x_{j,k} \in E_{j,k}$ for all $j,k$.  We then choose functions
$\varphi_{j,k}, \psi_{j,k} \in
{\mathcal M}_{x_{j,k},a^j}$ for all $j,k$.  We then claim that
the ``summation operator'' $S$ defined by
\begin{equation}
\label{sumopint}
SF = \sum_{j}\sum_k \mu(E_{j,k}) \langle  F, \varphi_{j,k}  \rangle  \psi_{j,k}
\end{equation}
is bounded on $L^2({\bf M})$, and that $\|S\| \leq C$ where $C$ is {\em independent}
of our choices of $E_{j,k}, ~\varphi_{j,k}, ~\psi_{j,k}$.  A precise statement is
given in Theorem \ref{sumopthm}.  To prove this result, we use the $T(1)$ theorem to
show that $S$ is a Calder\'on-Zygmund operator.  (If ${\bf M}$ were $\RR^n$,
and if the $\varphi_{j,k}$ and $\psi_{j,k}$ were obtained from fixed functions by
means of translations and dilations in the usual manner, then this result is well known
(see e.g. \cite{gil}).)

This result shows that, at least, the sum in (\ref{kerfajap}) (or equivalently
(\ref{mtvest})) is finite, since it equals $ \langle  SF,F  \rangle $, provided, in
the formula (\ref{sumopint}), we set
\begin{equation}
\label{vphpsk}
\varphi_{j,k}(y) = \psi_{j,k}(y) = \overline{K}_{a^j}(x_{j,k},y).
\end{equation}

Once this is known, here is the spirit of how one can pass from
(\ref{kerfaj}) to (\ref{kerfajap}).  One can note that, by the
fundamental theorem of calculus, the error made when replacing the outer integral in
(\ref{kerfaj}) by the Riemann sum in (\ref{kerfajap}) can be expressed in
terms of the integrals of appropriate derivatives over short intervals.  Theorem
\ref{sumopthm} can then be used again to bound the integrand in this expression
for the error, and the error itself
is then small (in fact is $O(b)$) because the intervals of integration are short.
(It will actually be convenient to use identities such as $b\int_0^1 H(bs) ds =
\int_0^b H(s) ds$ to convert the integrals over short intervals to integrals
over $[0,1]$; then the desired $b$ will actually multiply the integrals.)  The
precise argument will be given in Theorem \ref{framain}.\\

A major advantage of working with functions of $\Delta$ is that one can now
perform a space-frequency analysis.  Based upon how well-localized a function
$F \in L^2$ is in space and in frequency, we can describe which terms in
the summation
\begin{equation}\notag
SF = \sum_j \sum_k \mu(E_{j,k})  \langle  F,\varphi_{j,k}  \rangle  \varphi_{j,k}
\end{equation}
are so small that they can be neglected.  (Here, $\varphi_{j,k}$ is again as in (\ref{vphpsk}).)
In the analogous situation on
the real line (\cite{Daubechies92}, Chapter 2), one may do this by means of a
time-frequency analysis.  The analogue here is to do a ``space-frequency'' analysis
(it is hardly reasonable to think of ${\bf M}$ as parametrizing time!).
However, in contrast
to the analysis for $\RR$ in \cite{Daubechies92}, we use the spectral theorem.  We shall
first do a frequency analysis. (Assuming, roughly, that $F$ is well-localized in
frequency, we describe which $j$ can be neglected).  This will reduce us to a finite sum.
Then we will do a spatial analysis. (Assuming, roughly, that $F$ is well-localized in
space, we describe which $k$ can be neglected for each of our now finite set of $j$).

Specifically, let $0 = \lambda_0 < \lambda_1 \leq \ldots$ be the eigenvalues of
$\Delta$ on ${\bf M}$, and let $\{u_m: 0 \leq m < \infty\}$ be an orthonormal
basis of eigenvectors with $\Delta u_m = \lambda_m u_m$ for each $m$.  If
$F = \sum c_m u_m \in L^2({\bf M})$, and $L \geq 0$, let $P_{[0,L]} F
= \sum_{\lambda_m \leq L} c_m u_m$.  Also let $J \geq 1$ be an integer, and suppose that
$F$ vanishes to order $l \geq 1$ at $0$.
In Theorem \ref{freqthm} we shall obtain the estimate
\begin{equation}
\label{freqestint}
\|SF - \sum_{j=-M}^N \sum_k \mu(E_{j,k})  \langle  F,\varphi_{j,k}  \rangle  \varphi_{j,k}\|_2
\leq \left(C_0b+ \frac{c^{\prime}_L}{a^{4Ml}}  + \frac{C_J^{\prime}}{a^{4NJ}}\right)\|F\|_2 +
2B_a \|(I - P_{[0,L]})F\|_2,
\end{equation}
where the constant $c^{\prime}_L$ depends only on $L$, $f$, and $a$,
the constant $C_J^{\prime}$ depends only on $J$, $f$, $a$ and ${\bf M}$, and where
$B_a$ is an upper frame bound for the frame $\{ \mu(E_{j,k})^{1/2} \varphi_{j,k}\}$.
($c^{\prime}_L$ and $C_J^{\prime}$ are identified in Theorem \ref{freqthm}.)

Thus, say one wants to compute $SF$ to a certain precision.  One calculates
the finite sum
\begin{equation}
\label{MNappS1}
\sum_{j=-M}^N \sum_k \mu(E_{j,k})  \langle  F,\varphi_{j,k}  \rangle  \varphi_{j,k}
\end{equation}
for $M, N$ sufficiently large;
how large must one take them to be?  One first chooses $b, J, L, N$ so that the
first, third and fourth terms on the right side of (\ref{freqestint}) are
very small.  Then one chooses $M$ to make the second term on the right side of
(\ref{freqestint}) very small as well.

Let us explain, briefly, why one ought to expect a result like (\ref{freqestint}).
Note that (\ref{mtvest}) says that, if $F = (I-P)F$, then
\begin{equation}\notag
(A_a-\epsilon) \langle F,F \rangle  \leq  \langle SF,F \rangle  \leq (B_a + \epsilon)  \langle F,F \rangle .
\end{equation}
Thus, the arguments leading from (\ref{aasumba0}) to (\ref{mtvest}) show that $$SF \sim
\sum_{j=-\infty}^{\infty} |f|^2(a^{2j}\Delta) F.$$
  In turn, by (\ref{gelmay2}) and
(\ref{daubest}), $\sum_{j=-\infty}^{\infty} |f|^2(a^{2j}\Delta) F \sim C_0F$, where $C_0 =
c/(2 |\log a|)$.  Similar reasoning shows that the truncated sum

\[ \sum_{j=-M}^N \sum_k \mu(E_{j,k})  \langle  F,\varphi_{j,k}  \rangle  \varphi_{j,k} \sim
\sum_{j=-M}^N |f|^2(a^{2j}\Delta) F. \]

Let us then try to understand why, if $F$ is well-localized in frequency, then
$\sum_{j=-M}^N |f|^2(a^{2j}\Delta) F \sim C_0F$ for $M,N$ sufficiently large.  That would then
strongly suggest that we can truncate the series for $SF$ with only a small error, as in
(\ref{freqestint}).
 For $\lambda > 0$, let
\begin{equation}
\label{gladf}
g(\lambda) = \sum_{j=-\infty}^{\infty} |f(a^{2j} \lambda)|^2,
\end{equation}
  the Daubechies sum, which is always between $A_a$ and $B_a$, for any $\lambda$, although
(since $f$ vanishes at $0$ and $\infty$) the
set of terms in the series which are sizable {\em depends very much on} $\lambda$.
Let
\begin{equation}
\label{gmnladf}
g_{M,N}(\lambda) = \sum_{j=-M}^N |f(a^{2j} \lambda)|^2.
\end{equation}
 If  $F = \sum_{m \geq 1} c_m u_m \in L^2({\bf M})$, then
\begin{equation}
\label{frqstup}
\sum_{j=-M}^N |f|^2(a^{2j} T)F = \sum_{m=1}^{\infty} c_m\left [ \sum_{j=-M}^N |f|^2(a^{2j} T)u_m\right] =
  \sum_{m=1}^{\infty} c_m g_{M,N}(\lambda_m)u_m.
\end{equation}
If we had the Daubechies sum $g$ here in place of $g_{M,N}$, this would be a good
approximation to $C_0F$.  In the actual situation, say $F$ is well-localized, in the sense that
all $c_m$ with $\lambda_m > L$ are {\em negligible}.  Then the rightmost member of
(\ref{frqstup}) will be a good approximation to
$C_0F$ provided
\[ g_{M,N}(\lambda_m) \sim g(\lambda_m) \]
whenever $\lambda_1 \leq \lambda_m \leq L$.  Recalling (\ref{gladf}) and (\ref{gmnladf}), and the fact that $f$
vanishes at $0$ and vanishes rapidly at $\infty$,
we see that this happens provided that the interval $[-M,N]$ contains all $j$ with $a^{2j} \lambda_m
\sim 1$ whenever $\lambda_1 \leq \lambda_m \leq L$, or in other words, all $j$ with $C/L \leq a^{2j} \leq
C/\lambda_1$.  This will happen if $M$ and $N$ are sufficiently large.  Thus, we expect that we can truncate
with only a small error, as is precisely borne out in
(\ref{freqestint}).\\

(\ref{freqestint}) reduces the approximate evalustion of $SF$ to the calculation of the
finite sum (\ref{MNappS1}).  It may well be possible to further reduce
the number of terms that need to be considered, provided
$F$ is well-localized in space, e.g. if it is supported in a ball $B$ of small radius.
For, say $x_{j,k}$ is far from this ball.   By Lemma \ref{manmol},
for any $I > 0$, there exists
$C_I$ such that $|\varphi_{j,k}(y)| \leq C_Ia^{(I-n)j}d(x_{j,k},y)^{-I}$
for all $y \in B$, which is small; so $| \langle  F, \varphi_{j,k}  \rangle |$
is small as well.  In Theorem \ref{spacest}, we make these considerations
precise, thereby performing a ``spatial analysis'' as well.\\

Our frames should be viewed as a discretization of the continuous wavelets we presented in our earlier
article \cite{gmcw}.  However, it is important to notice that we will not actually make use of those
continuous wavelets, since we get better results by discretizing Calder\'on's integral first (to obtain Daubechies'
condition), and then discretizing as in (\ref{aasumba0}) -- (\ref{mtvest}).  In this manner, we obtain
frames if Daubechies' condition holds, and nearly tight frames as $a \rightarrow 1^+$, with nearly quadratic
convergence of the ratio of the frame bounds to $1$ -- results that we would not obtain if we were
to discretize our continuous wavelets in a more standard manner.  (As an aside, we note that if
one did want to discretize the consinuous ${\cal S}$-wavelets of \cite{gmcw} in a standard manner, one would
want to require an additional condition such as (\ref{xykt}) of that article, in order
to be able to apply the fundamental theorem of calculus in the $t$ variable.  As we noted in \cite{gmcw},
this additional condition does hold if $K_t$ is the Schwartz kernel of $f(t^2T)$.)\\

In another article (already available \cite{gm3}), we show that one can determine whether
$F$ is in a Besov space, solely from a knowledge of the size of its frame coefficients.
In a future article, we hope to study the same question for Triebel-Lizorkin spaces.
(The analogous problems on $\RR^n$ were solved in \cite{fj} and \cite{fj2}.)

\section{Frames}

We shall need the following basic facts, from Section 3 of \cite{gmcw}, about ${\bf M}$ and its
geodesic distance $d$.
For $x \in {\bf M}$, we let $B(x,r)$ denote the ball $\{y: d(x,y) < r\}$.

\begin{proposition}
\label{ujvj}
Cover ${\bf M}$ with a finite collection of open sets $U_i$  $(1 \leq i \leq I)$,
such that the following properties hold for each $i$:
\begin{itemize}
\item[$(i)$] there exists a chart $(V_i,\phi_i)$ with $\overline{U}_i
\subseteq V_i$; and
\item[$(ii)$] $\phi_i(U_i)$ is a ball in $\RR^n$.
\end{itemize}
Choose $\delta > 0$ so that $3\delta$ is a Lebesgue number for the covering
$\{U_i\}$.  Then, there exist $c_1, c_2 > 0$ as follows:\\
For any $x \in {\bf M}$, choose any $U_i \supseteq B(x,3\delta)$.  Then, in
the coordinate system on $U_i$ obtained from $\phi_i$,
\begin{equation}
\label{rhoeuccmp2}
d(y,z) \leq c_2|y-z|
\end{equation}
for all $y,z \in U_i$; and
\begin{equation}
\label{rhoeuccmp}
c_1|y-z| \leq d(y,z)
\end{equation}
for all $y,z \in B(x,\delta)$.
\end{proposition}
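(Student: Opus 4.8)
The plan is to exploit compactness of $\mathbf{M}$ together with the fact that on any single coordinate chart the Riemannian metric tensor $g_{ij}$ is comparable (as a positive-definite form) to the Euclidean one, uniformly on a compact subset. First I would fix the covering $\{U_i\}$ satisfying $(i)$ and $(ii)$ — such a covering exists because $\mathbf{M}$ is a manifold (pick a chart around each point whose image contains a ball, shrink, extract a finite subcover), and then $3\delta$ can be taken to be a Lebesgue number of $\{U_i\}$ by the Lebesgue number lemma, since $\mathbf{M}$ is compact. The Lebesgue number property is exactly what guarantees that for each $x \in \mathbf{M}$ the ball $B(x,3\delta)$ lies inside some $U_i$, so the hypothesis ``choose any $U_i \supseteq B(x,3\delta)$'' is nonvacuous.

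Next, for the upper bound \eqref{rhoeuccmp2}: on each $\overline{U}_i$ — which is compact and contained in the chart domain $V_i$ — the metric coefficients are continuous, so there is a constant such that the Riemannian length of any curve is at most a constant times its Euclidean length. In particular the Euclidean segment from $y$ to $z$ (which lies in $U_i$ because $\phi_i(U_i)$ is a \emph{ball}, hence convex) has Riemannian length $\leq c_2|y-z|$, and $d(y,z)$ is the infimum of such lengths, giving \eqref{rhoeuccmp2} for all $y,z \in U_i$. Taking the maximum of these constants over the finitely many $i$ yields a single $c_2$.

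For the lower bound \eqref{rhoeuccmp} the argument is the reverse inequality on metric tensors — again uniform on the compact set $\overline{U}_i$ — but now one must be careful that a geodesic (or near-geodesic) realizing $d(y,z)$ does not wander outside $U_i$, since outside the chart we have no comparison. This is the main obstacle, and it is exactly why the statement restricts to $y,z \in B(x,\delta)$ while only requiring $B(x,3\delta) \subseteq U_i$: if $y,z \in B(x,\delta)$ then $d(y,z) < 2\delta$, so any path from $y$ to $z$ of length less than $3\delta$ stays within $B(x,3\delta) \subseteq U_i$, and paths of length $\geq 3\delta$ are irrelevant to computing $d(y,z)$. Hence the infimum defining $d(y,z)$ may be taken over curves lying entirely in $U_i$, where the lower metric comparison applies, giving $c_1|y-z| \leq d(y,z)$; and again one takes the minimum of the finitely many constants $c_1$ over $i$ and, if necessary, shrinks $\delta$ so that $2\delta$ is small enough for this trapping argument to work on every chart simultaneously.
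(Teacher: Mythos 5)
Your argument is correct and is the standard one: on each compact set $\overline{U}_i \subset V_i$ the metric tensor is uniformly comparable to the Euclidean one, the convexity of the chart image $\phi_i(U_i)$ gives the upper bound $d(y,z)\le c_2|y-z|$ via the Euclidean segment, and the trapping argument (near-minimizing paths of length $<3\delta$ starting and ending in $B(x,\delta)$ cannot leave $B(x,3\delta)\subseteq U_i$) justifies applying the lower metric comparison to obtain $c_1|y-z|\le d(y,z)$; finitely many charts yield uniform $c_1,c_2$. This is also what the cited source (\cite{gmcw}, Section 3) does, and you correctly identified the trapping step as the reason the lower bound is only asserted on $B(x,\delta)$. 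The only superfluous remark is the final suggestion that one might need to ``shrink $\delta$'' for the trapping to work on every chart: no shrinking is needed, since the containment $B(x,3\delta)\subseteq U_i$ guaranteed by the Lebesgue number is precisely the hypothesis the trapping argument uses, uniformly in $i$.
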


We fix collections $\{U_i\}$, $\{V_i\}$, $\{\phi_i\}$ and also $\delta$ as in
Proposition \ref{ujvj}, once and for all.
\begin{itemize}
\item Notation as in Proposition \ref{ujvj},
there exist $c_3, c_4 > 0$, such that, whenever $x \in {\bf M}$ and $0 < r \leq \delta$,
\begin{equation}
\label{ballsn}
c_3r^n \leq \mu(B(x,r)) \leq c_4r^n
\end{equation}
\item
For any $N > n$ there exists $C_N$ such that, for all $x \in {\bf M}$ and $t > 0$,
\begin{equation}
\label{ptestm}
\int_{\bf M} [1 + d(x,y)/t]^{-N} d\mu(y) \leq C_N t^n
\end{equation}
\item
For any $N > n$ there exists $C_N^{\prime}$ such that, for all $x \in {\bf M}$ and $t > 0$
\begin{equation}
\label{ptestm1}
\int_{d(x,y) \geq t} d(x,y)^{-N} d\mu(y) \leq C_N^{\prime} t^{n-N}
\end{equation}
\item
For any $N > n$ there exists $C_N^{\prime \prime}$ such that for all $x,y \in {\bf M}$ and $t > 0$,
\begin{equation}
\label{ptestm2}
\int_{\bf M} [1 + d(x,z)/t]^{-N} [1 + d(z,y)/t]^{-N}d\mu(z) \leq
C_N^{\prime \prime} t^n[1 + d(x,y)/t]^{-N}
\end{equation}
\item
For all $M, t > 0$, and for all
$E \subseteq {\bf M}$ with diameter less than $Mt$, if
$x_0 \in E$, then one has that
\begin{equation}
\label{alcmpN}
\frac{1}{M+1}[1+d(x,y)/t] \leq [1+d(x_0,y)/t] \leq (M+1)[1+d(x,y)/t]
\end{equation}
for all $x \in E$ and all $y \in {\bf M}$.\\
\end{itemize}
\ \\
Now, fix a finite set ${\mathcal P}$ of real $C^{\infty}$ vector fields on ${\bf M}$, whose
elements span the tangent space at each point.  Using compactness,
it is then easy to see that, if
${\mathcal P}'$ is another such set, there is a constant $C > 0$ such that
for any $f \in C^1({\bf M})$, and any $x \in {\bf M}$, we have that
\begin{equation}\notag
\max_{X' \in {\mathcal P}'} |X'f(x)| \leq C\max_{X \in {\mathcal P}} |Xf(x)|.
\end{equation}
As will soon be apparent, this implies that the choice of set ${\mathcal P}$ is immaterial
(up to a constant) for the estimates which follow.  For convenience, we
choose ${\mathcal P}$ so that for every $l$ with $1 \leq l \leq n$, and every $i$, there
is a vector field $X$ in ${\mathcal P}$ such that $X \equiv \partial/\partial x_l$
on $U_i$ (in the local coordinates obtained from $\phi_i$).  (Notation as
in Proposition  \ref{ujvj}).
We also let
\[ {\mathcal P}_0 = {\mathcal P} \cup \{\mbox{ the identity map}\}.\]
%
%
 Let
\[ C_0^{\infty}({\bf M}) = \left\{\varphi \in C^{\infty}({\bf M}):\;  \int_{\bf M} \varphi d\mu = 0\right\}. \]
For any $x \in {\bf M}$, we let
\begin{equation}
\label{mxdef}
{\mathcal M}_{x,t} = \left\{\varphi \in C_0^{\infty}({\bf M}):  \;
t^{n+\deg Y} \left|\left(\frac{d(x,y)}{t}\right)^N Y\varphi(y)\right|\leq 1
\mbox{ whenever } y \in {\bf M}, \: 0 \leq N \leq n+2 \mbox{ and }
Y \in {\mathcal P}_0\right\}.
\end{equation}
{\bf Important Example} Notation as in Lemma \ref{manmol}, for each $x \in {\bf M}$, we
define the functions $\varphi^t_x, \psi^t_x$ on ${\bf M}$ by
$\varphi^t_x(y) = K_t(x,y)$ and $\psi^t_x(y) = K_t(y,x)$.  Then, by Lemma \ref{manmol}
there exists $C > 0$ such that $\varphi^t_x$ and $\psi^t_x$ are in $C{\mathcal M}_{x,t}$
{\em for all} $x \in {\bf M}$ and all $t > 0$.\\
\ \\
Our main task in this section is to justify passing from (\ref{kerfaj}) to (\ref{kerfajap}).
Our main technical tool will be the following fact.   For each $j \in \ZZ$,
we write ${\bf M}$ as a finite disjoint union of sets $E_{j,k}$ of diameter at most $a^j$,
and we choose points $x_{j,k} \in E_{j,k}$ for all $j,k$.  We then choose functions
$\varphi_{j,k}, \psi_{j,k} \in
{\mathcal M}_{x_{j,k},a^j}$ for all $j,k$.  We then claim that
the ``summation operator'' $S$ defined by
\begin{equation}
\label{sumopdf}
SF = \sum_{j}\sum_k \mu(E_{j,k}) \langle  F, \varphi_{j,k}  \rangle  \psi_{j,k}
\end{equation}
is bounded on $L^2({\bf M})$, and that $\|S\| \leq C$ where $C$ is {\em independent}
of our choices of $E_{j,k}, ~\varphi_{j,k}, ~\psi_{j,k}$.  (A precise statement is
given in Theorem \ref{sumopthm} below.)

If we
recall the definition of the space ${\mathcal M}_{x,t}$, in (\ref{mxdef}), we see:
\begin{itemize}
\item
For every $C_1 > 0$ there exists $C_2 > 0$ such that whenever
$t > 0$ and $d(x,y) \leq C_1t$,
we have
\begin{equation}
\label{nrbyok}
{\mathcal M}_{y,t} \subseteq C_2{\mathcal M}_{x,t}.
\end{equation}
Say now $x \in {\bf M}$, and $R > 0$.  We say that a function $\omega \in
C^1({\bf M})$ is a {\em bump function for the ball} $B(x,R)$, provided $\supp\; \omega
\subseteq B(x,R)$, $\|\omega\|_{\infty} \leq 1$ and $\max_{X \in {\mathcal P}} \|X\omega\|_{\infty}
\leq 1/R$.  If  $\omega$ is a bump function for some ball $B(x,R)$,
we say that $\omega$ is an {\em $R$-bump function}.  (This notion is modelled on the
following situation on $\RR^n$: take a $C^1$ function $\varphi$ on $\RR^n$ with support in the unit
ball and with $\|\varphi\|_{C^1} \leq 1$, and look at $\omega(x) = \varphi(x/R)$.)  Note
also that {\em every} $C^1$ function on ${\bf M}$ is a multiple of an $R$-bump function,
where $R = 2(\diam{\bf M})$.
\item
 There is a constant $C > 0$ such that for every $R \leq 2(\diam{\bf M})$,
every $R$-bump function $\omega$, and every $x,y \in {\bf M}$, one has
\begin{equation}
\label{mvtm}
|\omega(x)-\omega(y)| \leq Cd(x,y)/R.
\end{equation}
{\em To see this, we again use the notation of Proposition \ref{ujvj}.
If $d(x,y) < \delta$, we choose
$U_i \supseteq B(x,3\delta)$, and use the mean value theorem in the local coordinates
on $U_i$ obtained from $\phi_i$, to obtain (\ref{mvtm}).
If, on the other hand, $d(x,y) > \delta$,
we simply note
$$|\omega(x)-\omega(y)| \leq 2 = [2R/d(x,y)][d(x,y)/R]
\leq [4(\diam{\bf M})/\delta][d(x,y)/R].$$}
\end{itemize}

If $T: C^{1}({\bf M}) \rightarrow L^2({\bf M})$ is linear,
we say that a linear operator $T^* : C^1({\bf M}) \rightarrow L^2({\bf M})$
is its formal adjoint if for all $f, g \in C^{1}$ we have
\begin{equation}\notag
<Tf,g> = <f,T^*g>.
\end{equation}
$T^*$ is evidently unique if it exists.

Recall that ${\mathcal P}_0 = {\mathcal P} \cup \{\mbox{ the identity map}\}$.
We will be using the following form of the David-Journ\'e $T(1)$ theorem
\cite{DavidJourne84}.

\begin{theorem} \label{tof1q}
There exist $C_0, N > 0$, such that for any $A > 0$, we have the following.
Whenever $T: C_c^{1}({\bf M}) \rightarrow L^2({\bf M})$ has a formal adjoint
$T^*: C_c^{1}({\bf M}) \rightarrow L^2({\bf M})$, and whenever $T, T^*$ satisfy:
\begin{itemize}
\item[$(i)$] $\|T \omega\|_2 \leq A R^{n/2}$ and $\|T^* \omega\|_2 \leq A R^{n/2}$
for all $R$-bump functions $\omega$;
\item[$(ii)$] There is a kernel $K(x,y)$, $C^1$ off the diagonal,
such that if $F \in C^{1}$,
then for $x$ outside the support of $F$, $(TF)(x) = \int K(x,y)F(y)dy$; and
\item[$(iii)$] Whenever $X$ $($acting in the $x$ variable$)$ and $Y$ (acting in the
$y$ variable) are in ${\mathcal P}_0$, and at least one of $X$ and $Y$ is the identity map, we have
\[ |XYK(x,y)| \leq A\;d(x,y)^{-(n+\deg X + \deg Y)} \]
for all $x, y \in {\bf M}$ with $x \neq y$; and
\item[$(iv)$]$T(1) = T^*(1) = 0$,
\end{itemize}
then $T$ extends to a bounded operator on $L^2(\bf M)$, and $\|T\| \leq C_0 A$.
\end{theorem}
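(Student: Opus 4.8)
The plan is to show, under $(i)$--$(iv)$, that $T$ is a Calder\'on--Zygmund operator by the classical route: build a Littlewood--Paley decomposition adapted to $\Delta$, represent $T$ (as a bilinear form on $C^1$) as an absolutely convergent superposition of the pieces $Q_tTQ_s$, prove an almost-orthogonality estimate for these pieces, and sum by a Schur test. Replacing $T$ by $A^{-1}T$, assume $A=1$. Since constants are smooth on the compact ${\bf M}$, condition $(iv)$ gives $\langle Tf,1\rangle=\langle f,T^\ast 1\rangle=0$ and $TPf=\big(\mu({\bf M})^{-1}\int_{\bf M}f\,d\mu\big)T1=0$ for $f\in C^1$, so $T=(I-P)T(I-P)$ on $C^1$; it therefore suffices to bound $|\langle Tf,g\rangle|$ for $f,g\in C^1$ of mean zero.

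Fix $\psi\in{\mathcal S}(\RR^+)$ with $\psi(0)=0$ and $\int_0^\infty|\psi(t^2\lambda)|^2\,\frac{dt}{t}\equiv 1$, and put $Q_t=\psi(t^2\Delta)$ (self-adjoint). The spectral calculus -- this is the identity $(\ref{gelmay1})$ from \cite{gmcw} -- gives $\int_0^\infty Q_t^2\,\frac{dt}{t}=I-P$ (strongly) and $\int_0^\infty\|Q_tf\|_2^2\,\frac{dt}{t}=\|(I-P)f\|_2^2$; and since $\psi$ vanishes at $0$, Lemma \ref{manmol} shows the kernel $\rho_t(x,y)$ of $Q_t$ satisfies $t^{\,n+\deg X+\deg Y}|XY\rho_t(x,y)|\le C_N(1+d(x,y)/t)^{-N}$ for all $N$, in particular $\int_{\bf M}\rho_t(x,y)\,d\mu(y)=\int_{\bf M}\rho_t(x,y)\,d\mu(x)=0$ (as $Q_t1=0$). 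Using $(I-P)f=\int_0^\infty Q_t(Q_tf)\,\frac{dt}{t}$, the same for $g$, and $Tf\perp 1$, one gets, for $f,g\in C^1$ of mean zero,
\[
\langle Tf,g\rangle=\int_0^\infty\!\!\int_0^\infty\langle(Q_tTQ_s)(Q_sf),\,Q_tg\rangle\,\frac{ds}{s}\,\frac{dt}{t};
\]
this is legitimate because, for fixed $g\in C^1$, $\langle TQ_s^2f,g\rangle=\langle Q_s^2f,T^\ast g\rangle$ with $T^\ast g\in L^2$ fixed, so one truncates $s,t$ to $[\varepsilon,R]$, interchanges with the resulting Bochner integrals, and passes to the limit by dominated convergence once the double integral is known to converge absolutely. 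Granting the almost-orthogonality bound
\[
\|Q_tTQ_s\|_{L^2\to L^2}\ \le\ C\big(\min(s/t,t/s)\big)^{\delta}\qquad(\text{some }\delta>0),
\]
the displayed identity, Cauchy--Schwarz, and the Schur test on $\big((0,\infty),\tfrac{dt}{t}\big)$ (note $\int_0^\infty(\min(s/t,t/s))^\delta\,\frac{dt}{t}=C_\delta<\infty$ uniformly in $s$) give $|\langle Tf,g\rangle|\le C'\big(\int\|Q_sf\|_2^2\tfrac{ds}{s}\big)^{1/2}\big(\int\|Q_tg\|_2^2\tfrac{dt}{t}\big)^{1/2}\le C'\|f\|_2\|g\|_2$, hence $\|T\|\le C_0$ by density.

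The crux -- and the step I expect to be the main obstacle -- is the almost-orthogonality estimate; here all of $(i)$--$(iv)$ and the geometric estimates of Section 2 enter. By symmetry (replace $T$ by $T^\ast$, which satisfies $(i)$--$(iv)$ with the same constant) take $t\le s$. The kernel of $Q_tTQ_s$ is $\mathcal K(x,y)=\langle T\rho_s(\cdot,y),\rho_t(x,\cdot)\rangle$, and I would prove $|\mathcal K(x,y)|\le C(t/s)^{\delta}s^{-n}(1+d(x,y)/s)^{-n-\delta}$, after which $(\ref{ptestm})$ and Schur give the operator bound. When $d(x,y)\gtrsim s$ one writes $\mathcal K(x,y)=\iint\rho_t(x,u)K(u,v)\rho_s(v,y)\,d\mu(u)d\mu(v)$ using $(ii)$, exploits $\int\rho_s(\cdot,y)\,d\mu=0$ to replace $K(u,v)$ by $K(u,v)-K(u,y)$, and uses the kernel smoothness $(iii)$ to extract a factor $d(v,y)/d(u,y)\lesssim t/d(x,y)$ ($d(v,y)\lesssim t$ on the effective supports, since $t\le s$); with $(\ref{ptestm1})$ and the comparison $(\ref{alcmpN})$ this gives the bound with $\delta=1$ here. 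When $d(x,y)\lesssim s$ the off-diagonal kernel is unavailable; now $\rho_t(x,\cdot)$ and $\rho_s(\cdot,y)$ are -- modulo rapidly decaying tails handled by $(\ref{ptestm})$--$(\ref{alcmpN})$ -- constant multiples of normalized mean-zero bump functions at scales $t$ and $s$, both essentially supported in a ball of radius $\sim s$, so writing $\mathcal K(x,y)=\langle\rho_t(x,\cdot),T^\ast\rho_s(\cdot,y)\rangle$ and using $\int\rho_t(x,\cdot)\,d\mu=0$ reduces the bound to the fact that $T^\ast$ carries a smooth $s$-bump to a function with gradient $O(1/s)$ on the relevant ball -- a regularity property of $T^\ast$ on $C^1$ which, in the classical scheme, follows from the weak boundedness property $(i)$ together with $(ii)$--$(iii)$ and the bump estimate $(\ref{mvtm})$. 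Combining the two ranges gives the estimate for $\mathcal K$, hence the almost-orthogonality bound. (Alternatively the theorem may be deduced from the $T(1)$ theorem on the space of homogeneous type $({\bf M},d,\mu)$, which is doubling by $(\ref{ballsn})$; the formulation above, phrased via the vector fields in ${\mathcal P}_0$, is the one the direct argument produces.)
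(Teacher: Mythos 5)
The paper itself offers no proof of Theorem \ref{tof1q}: it cites \cite{fabes} for the Euclidean case and asserts that argument transplants to ${\bf M}$. Your Littlewood--Paley/Cotlar--Schur plan is of the same general kind as the one in \cite{fabes}, and the reduction to $A=1$, to mean-zero $f,g$, and the Schur summation at the end are fine once the almost-orthogonality estimate is available. But your sketch of that estimate has two problems, the second of which is a genuine gap.

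First, a correctable slip in the far regime $d(x,y)\gtrsim s$ (with $t\le s$): you exploit the cancellation of $\rho_s(\cdot,y)$ and assert $d(v,y)\lesssim t$ ``on the effective supports.'' In fact $\rho_s(v,y)$ concentrates at scale $s$, so $d(v,y)\lesssim s$, and the replacement $K(u,v)\mapsto K(u,v)-K(u,y)$ yields a factor $\sim s/d(x,y)$ --- the right decay in $d(x,y)/s$ but no $(t/s)^\delta$ gain. The gain has to come from the cancellation at the finer scale: use $\int\rho_t(x,\cdot)\,d\mu=0$ to write $K(u,v)\mapsto K(u,v)-K(x,v)$, and $(iii)$ gives $|K(u,v)-K(x,v)|\lesssim d(u,x)\,d(x,v)^{-(n+1)}\lesssim t\, d(x,y)^{-(n+1)}=(t/s)\cdot s\,d(x,y)^{-(n+1)}$, which is what you want.

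Second, and more seriously, the near regime $d(x,y)\lesssim s$. You reduce to the claim that $T^*$ carries a normalized smooth $s$-bump to a function with gradient $O(1/s)$ on the relevant ball, and assert this ``follows from the weak boundedness property $(i)$ together with $(ii)$--$(iii)$.'' It does not. Condition $(i)$ controls only $\|T^*\omega\|_2$; condition $(iii)$ controls $T^*\omega$ only outside $\supp\,\omega$; nothing in $(i)$--$(iv)$ gives any pointwise, let alone $C^1$, control of $T^*\omega$ on $\supp\,\omega$, and for a general operator satisfying these hypotheses $T^*\omega$ need not be $C^1$ there at all. This is precisely the nontrivial content of the $T(1)$ theorem: one must prove $\|Q_tTQ_s\|\lesssim(t/s)^\delta$ for $t\le s$ without assuming interior regularity of $TQ_s$ or $T^*Q_t$. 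The standard devices are either to factor $Q_t$ through a first-order operator so the derivative can be transferred onto the known-smooth kernel $\rho_s$, or (as in \cite{fabes}) to pass to a discrete dyadic Littlewood--Paley decomposition, apply weak boundedness on cubes comparable to the coarser scale together with the kernel estimates off those cubes, and extract a small geometric gain between diagonal and off-diagonal blocks. As written, your sketch does not supply this step, and without the $(t/s)^\delta$ factor the Schur integral over $s,t$ diverges.
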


A simple proof of this theorem in the case ${\bf M} = \RR^n$ may be found in \cite{fabes}.
That proof immediately adapts to our situation, in which ${\bf M}$ is a smooth compact
oriented Riemannian manifold.\\
\ \\
{\bf Note:} In verifying condition (i) of Theorem \ref{tof1q}, one may assume that
$R \leq 2(\diam{\bf M})$.  For, set $R_0 = 2(\diam{\bf M})$; it is evident that if
$R \geq R_0$, then any $R$-bump function $\omega$ is also an $R_0$-bump function.  If we know that
$\|T \omega\|_2 \leq A R_0^{n/2}$, then we also know that $\|T \omega\|_2 \leq A R^{n/2}$.\\

We can now prove the main facts about the summation operator (defined in (\ref{sumopdf})).

\begin{theorem}
\label{sumopthm}
Fix $a>1$.  Then there exists $C_1, C_2 > 0$ as follows.

For each $j \in \ZZ$, write ${\bf M}$ as a finite disjoint union of
measurable subsets $\{E_{j,k}: 1 \leq k \leq N_j\}$, each of diameter less than $a^j$.
For each $j,k$, select any $x_{j,k} \in E_{j,k}$, and select $\varphi_{j,k}$,
 $\psi_{j,k}$ with \\$\varphi_{j,k} , \psi_{j,k}\in \mathcal{M}_{x_{j,k}, a^j}$.
For $F \in C^1({\bf M})$, we claim that we may define
\begin{equation}
\label{sumopdf2}
SF = S_{\{\varphi_{j,k}\},\{\psi_{j,k}\}}F =
\sum_{j}\sum_k \mu(E_{j,k}) \langle  F, \varphi_{j,k}  \rangle  \psi_{j,k}.
\end{equation}
$($Here, and in similar equations below, the sum in $k$ runs from $k = 1$ to $k = N_j$.$)$
Indeed:
\begin{itemize}
\item[$(a)$] For any $F\in C^1({\bf M})$, the series defining $SF$ converges absolutely, uniformly on $\bf{M}$,
In fact, if, for some $R\leq 2(\diam{\bf M})$, $F = \omega$ is an $R$-bump function, then the sum of
the absolute values of
the terms is less than or equal to $C_1$ at each point of ${\bf M}$.
\item[$(b)$] $\parallel SF\parallel_2\leq C_2\parallel F\parallel_2$ for all
 $F\in C^1(\bf{M})$.\\
Consequently, $S$ extends to be a bounded operator on $L^2({\bf M})$, with norm less than
or equal to $C_2$.  In fact, if we put $T = S$, then $S$ satisfies the hypotheses of
Theorem $\ref{tof1q}$.
\item[$(c)$] If $F \in L^2({\bf M})$, then
\begin{equation}\notag
SF = \sum_{j}\sum_k \mu(E_{j,k}) \langle  F, \varphi_{j,k}  \rangle  \psi_{j,k}
\end{equation}
where the series converges unconditionally.
\item[$(d)$] If $F,G \in L^2({\bf M})$, then
\begin{equation}\notag
 \langle  SF,G  \rangle =
\sum_{j}\sum_{k}\mu(E_{j,k}) \langle  F, \varphi_{j,k} \rangle   \langle \psi_{j,k},G \rangle ,
\end{equation}
where the series converges absolutely.\\
\end{itemize}
\end{theorem}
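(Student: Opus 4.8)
The plan is to establish the four parts essentially in the order stated, the substantial work being part (b): showing that $S$ fits the hypotheses of the $T(1)$ theorem, Theorem~\ref{tof1q}. Part (a) falls out of the same estimates, and parts (c), (d) are soft consequences. Two facts about molecules drive everything: if $\varphi \in \mathcal{M}_{x,t}$, then taking $N=0$ and $N=n+2$ in (\ref{mxdef}) gives $|Y\varphi(y)| \leq C\,t^{-n-\deg Y}[1+d(x,y)/t]^{-(n+2)}$ for $Y \in \mathcal{P}_0$, and $\int_{\bf M}\varphi\,d\mu = 0$. The recurring manoeuvre is: since for fixed $j$ the $E_{j,k}$ are disjoint of diameter $\leq a^j$ with $x_{j,k}\in E_{j,k}$, estimate (\ref{alcmpN}) lets one replace a sum $\sum_k \mu(E_{j,k})H(x_{j,k})$ of values of a slowly-varying $H$ by $\leq C\int_{\bf M}H\,d\mu$, to which (\ref{ptestm}) or (\ref{ptestm2}) then applies; and all constants so produced depend only on ${\bf M}$ and $a$, never on the choices of $E_{j,k},x_{j,k},\varphi_{j,k},\psi_{j,k}$ -- which is the whole point.

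For (a): fix an $R$-bump function $\omega$ with centre $x_\omega$ and $R \leq 2(\diam{\bf M})$, and split the double sum according to whether $a^j \leq R$ or $a^j > R$. When $a^j \leq R$, use $\int\varphi_{j,k}=0$ to write $\langle\omega,\varphi_{j,k}\rangle = \langle\omega-\omega(x_{j,k}),\varphi_{j,k}\rangle$, then (\ref{mvtm}) and the molecule bound to get $|\langle\omega,\varphi_{j,k}\rangle| \leq C(a^j/R)[1+d(x_{j,k},x_\omega)/R]^{-(n+1)}$; when $a^j > R$, use instead $\|\omega\|_1 \leq CR^n$ (from (\ref{ballsn}), trivially if $R>\delta$) and $\diam(\supp\omega)<2a^j$ to get $|\langle\omega,\varphi_{j,k}\rangle| \leq CR^n(a^j)^{-n}[1+d(x_{j,k},x_\omega)/a^j]^{-(n+2)}$. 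Plugging these and the molecule bound for $|\psi_{j,k}(x)|$ into $\sum_{j,k}\mu(E_{j,k})|\langle\omega,\varphi_{j,k}\rangle|\,|\psi_{j,k}(x)|$, passing to integrals in $k$, using (\ref{ptestm2}), and summing the geometric series in $j$ yields a bound $\leq C_1$ uniform in $x$, hence absolute and uniform convergence. As every $C^1$ function is a bounded multiple of a $2(\diam{\bf M})$-bump function, (a) follows in general.

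For (b): put $T=S$; the formal adjoint is $S^\ast = S_{\{\psi_{j,k}\},\{\varphi_{j,k}\}}$, of the same form, so it suffices to treat $S$. Hypothesis (iv) holds termwise, since $\langle 1,\varphi_{j,k}\rangle = \int\varphi_{j,k}=0$ and likewise for $S^\ast$. For (ii)--(iii): the kernel is $K_S(x,y) = \sum_{j,k}\mu(E_{j,k})\psi_{j,k}(x)\overline{\varphi_{j,k}(y)}$; for $x\neq y$, the molecule bounds, the inequality $d(x_{j,k},x)+d(x_{j,k},y)\geq d(x,y)$, a passage to an integral in $k$ and (\ref{ptestm2}) show each inner sum is $\leq C(a^j)^{-n}[1+d(x,y)/a^j]^{-(n+2)}$; summing the geometric series in $j$ gives $|K_S(x,y)| \leq C\,d(x,y)^{-n}$, while applying one vector field in $x$ or $y$ costs a factor $(a^j)^{-1}$ and gives $\leq C\,d(x,y)^{-(n+1)}$ -- so (iii) holds and the differentiated series converge locally uniformly off the diagonal, making $K_S$ there $C^1$; and for $F\in C^1$, $x\notin\supp F$, interchanging sum and integral (legitimate by (a)) gives $(SF)(x)=\int K_S(x,y)F(y)\,d\mu(y)$, which is (ii). The main obstacle is hypothesis (i), $\|S\omega\|_2 \leq AR^{n/2}$ for $R$-bump functions: part (a) gives only $\|S\omega\|_\infty \leq C_1$, too weak for small $R$, so one must also extract spatial decay of $S\omega$ away from $\supp\omega$. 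Reusing the split of (a): for $a^j \leq R$ the factor $[1+d(x_{j,k},x)/a^j]^{-(n+2)}$ forces $d(x_{j,k},x_\omega) \gtrsim d(x,x_\omega)$, so the $[1+d(x_{j,k},x_\omega)/R]^{-(n+1)}$ factor contributes decay; for $a^j>R$, (\ref{ptestm2}) produces a factor $[1+d(x,x_\omega)/a^j]^{-(n+2)}$ directly. Summing in $j$ then gives $|S\omega(x)| \leq C[1+d(x,x_\omega)/R]^{-n}$, whence $\|S\omega\|_2^2 \leq C\int_{\bf M}[1+d(x,x_\omega)/R]^{-2n}\,d\mu(x) \leq CR^n$ by (\ref{ptestm}) (as $2n>n$); the same holds for $S^\ast\omega$. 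With (i)--(iv) verified, Theorem~\ref{tof1q} gives that $S$ extends to $L^2({\bf M})$ with $\|S\| \leq C_0A=:C_2$, which is (b).

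Finally (c) and (d): applying (b) with the choice $\psi_{j,k}=\varphi_{j,k}$ shows, for $F\in C^1$, that $\sum_{j,k}\mu(E_{j,k})|\langle F,\varphi_{j,k}\rangle|^2 = \langle S_{\{\varphi_{j,k}\},\{\varphi_{j,k}\}}F,F\rangle \leq C\|F\|_2^2$ (the equality by the uniform convergence in (a)), a Bessel estimate that extends to all $F\in L^2$ by density; likewise for $\psi_{j,k}$. Then in (d), $\sum_{j,k}\mu(E_{j,k})\langle F,\varphi_{j,k}\rangle\langle\psi_{j,k},G\rangle$ converges absolutely by Cauchy--Schwarz and equals $\langle SF,G\rangle$ -- first for $F,G\in C^1$ by (a) and (b), then for all $F,G\in L^2$ by density and continuity. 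For (c), the same duality shows that for any finite index set $\Lambda$ one has $\|\sum_{(j,k)\in\Lambda}\mu(E_{j,k})\langle F,\varphi_{j,k}\rangle\psi_{j,k}\|_2 \leq C\big(\sum_{(j,k)\in\Lambda}\mu(E_{j,k})|\langle F,\varphi_{j,k}\rangle|^2\big)^{1/2}$, whose right-hand side tends to $0$ as $\Lambda$ exhausts the index set; hence the series converges unconditionally in $L^2$, necessarily to $SF$.
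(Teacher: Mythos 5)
Your proof is correct and follows essentially the same strategy as the paper: molecule estimates, the bump-function pairing bounds, and the $T(1)$ theorem. The differences are tactical rather than structural, but they are worth pointing out. For condition $(i)$ of Theorem~\ref{tof1q}, the paper first establishes the kernel estimate $|K(x,y)| \leq C\,d(x,y)^{-n}$ (needed anyway for $(iii)$, via the ``Claim'' with $J=0$) and then gets the far-field decay of $S\omega$ in two lines by integrating that estimate against $\omega$ over $B(x_0,R)$; near the ball it simply invokes the uniform bound from $(a)$. You instead re-derive the spatial decay of $S\omega$ directly from the molecule structure by propagating a $[1+d(x_{j,k},x_\omega)/R]$-decay factor through the two-scale sum. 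This works, but requires more bookkeeping, and the intermediate claim $|\langle\omega,\varphi_{j,k}\rangle| \leq C(a^j/R)[1+d(x_{j,k},x_\omega)/R]^{-(n+1)}$ does not follow from (\ref{mvtm}) and the molecule bound alone as written: when $d(x_{j,k},x_\omega) \gg R$ the mean-value bound is useless, and one must switch to the support of $\omega$ together with the tail decay of $\varphi_{j,k}$. The combined estimate over both regimes is correct, so this is a gap in exposition rather than in substance, but the paper's near/far split is cleaner.

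For $(c)$ and $(d)$, the paper notes that the truncations $S^{\cal F}$ are operators of the same form (replace the omitted $\varphi_{j,k}$ by zero), hence uniformly bounded by $C_2$, and combines this with $(a)$ and density; $(d)$ is then immediate since absolute and unconditional convergence coincide for numerical series. You apply $(b)$ to $S_{\{\varphi\},\{\varphi\}}$ and $S_{\{\psi\},\{\psi\}}$ to extract Bessel-type bounds, then pass to the synthesis estimate by duality, which gives unconditional convergence of the vector series and, via Cauchy--Schwarz, absolute convergence in $(d)$ directly. Both routes are short; yours makes the frame structure more explicit, the paper's is slightly more self-contained in that it reuses the boundedness statement verbatim.
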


\begin{proof}[Proof of (a)]
We first prove the second statement of (a).
For each $j$, and each
$R \leq 2(\diam{\bf M})$,  we let
\[ C_{j,R} = \sup \left|  \langle  \omega, \varphi_{j,k} \rangle \right| \]
where the sup is taken over all $R$-bump functions $\omega$ and all $k$.
Since $ \left| \langle  \omega, \varphi_{j,k} \rangle \right| \leq \|\omega\|_1\|\varphi_{j,k}\|_{\infty}$
we surely have that

\begin{equation}
\label{cjrest1}
C_{j,R} \leq CR^n a^{-jn}.
\end{equation}

On the other hand, we claim that if $a^j \leq R$, then
\begin{equation}
\label{cjrest2}
C_{j,R} \leq Ca^j/R.
\end{equation}
In fact, since  $\int \varphi_{j,k}d\mu=0$, we find

 \begin{align}\label{goholder}\left| \langle  \omega,  \varphi_{j,k} \rangle \right|&= \left| \int
 \omega(x)\overline{\varphi_{j,k}(x)}d\mu(x)\right|  \\\notag
&=
  \left| \int \left( \omega(x)-\omega(x_{j,k})\right)\overline{\varphi_{j,k}(x)}d\mu(x)\right|\\\notag
  &\leq \int \left|  \omega(x)-\omega(x_{j,k})\mid \mid\varphi_{j,k}(x)\right| d\mu(x)\\\notag
 &\leq C\int R^{-1}d(x,x_{j,k}) \left|\varphi_{j,k}(x)\right| d\mu(x)\\\notag
&\leq CR^{-1} a^{-jn}a^j\int \left[1+d(x,x_{j,k})/a^j\right]^{-n-1}d\mu(x)\\\label{endholder}
&\leq Ca^j/R
  \end{align}
by (\ref{ptestm}).  Now, for any $j$ and any $y \in {\bf M}$, we have
\begin{align}
\sum_k \mu(E_{j,k}) \left|  \langle  \omega,  \varphi_{j,k} \rangle \right|\left|  \psi_{j,k}(y)\right|&\leq C_{j,R}\sum_k   \mu(E_{j,k}) \left|  \psi_{j,k}(y)\right|
\label{sumsetup}\\
&\leq C C_{j,R} a^{-jn}\sum_k \mu(E_{j,k}) \left[1+d(y,x_{j,k})/a^j\right]^{-n-1}
\label{sumtogo}
\\
&\leq C  C_{j,R} a^{-jn}\int_{\bf{M}}\left[1+d(y,x)/a^j\right]^{-n-1} d\mu(x) \label{sumgone}\\
&\leq   C  C_{j,R},  \label{ccjrhere}
\end{align}
again by (\ref{ptestm}).  (In passing from (\ref{sumtogo}) to (\ref{sumgone}), we used
(\ref{alcmpN}).)

Taking  the sum over $j$ we have:
\begin{align}\notag
\sum_j\sum_k   \mu(E_{j,k}) \left|  \langle  \omega,  \varphi_{j,k} \rangle \right| \left|
\psi_{j,k}(y)\right|&\leq C   \sum_j   C_{j,R}  \\\notag
& \leq C\left[\sum_{a^j \leq R}  R^{-1}a^j + \sum_{a^j > R}   R^{n}a^{-jn}\right] \\
& \leq C_1, \label{c1here}
\end{align}
by (\ref{cjrest1}) and (\ref{cjrest2}).  This proves the second statement in (a).
Since every $F \in C^1$ is a multiple of an $R$-bump function, with
$R = 2(\diam{\bf M})$, the
first statement in (a) is a consequence of the two calculations above, which ended
with (\ref{ccjrhere}) and (\ref{c1here}).
\end{proof}

 \begin{proof}[Proof of (b)]

To begin, let us set
\[ {\mathcal N}_{x,t} = \left\{\Phi \in C^{\infty}({\bf M}):  \;
t^{n} \left|\left(\frac{d(x,y)}{t}\right)^N \Phi(y)\right|\leq 1
\mbox{ whenever } y \in {\bf M}, \: 0 \leq N \leq n+2\right\} \]
{\bf Claim:} There exists $C_3 > 0$ (independent of our choice of the $E_{j,k}$
and $x_{j,k}$) as follows:
Suppose $\{  \Phi_{j,k}\}$ and $\{  \Psi_{j,k}\}$ are two systems of functions such that
$ \Phi_{j,k},  \Psi_{j,k} \in \mathcal{N}_{x_{j,k}, a^j}$ for all $j,k$, and
suppose $0 \leq J \leq 1$. Then
\begin{equation}\notag
K_J(x,y):=\sum_{j}\sum_k  a^{-jJ}\mu(E_{j,k})  \left| \Phi_{j,k}(x) \Psi_{j,k}(y)\right|
\leq  C_3d(x,y)^{-n-J}
\end{equation}
for any  $x, y\in \bf{M}$, $x\neq y$.\\
\ \\
To prove the claim, note that for any $j$,
\begin{align}\notag
\sum_k  a^{-jJ}\mu(E_{j,k})  \left| \Phi_{j,k}(x) \Psi_{j,k}(y)\right|&\leq
a^{-j(J+2n)}   \sum_k  \mu(E_{j,k})\left[1+d(x,x_{j,k})/a^j\right]^{-(n+2)}\left[1+d(y,x_{j,k})/a^j\right]^{-(n+2)}
\\\notag&\leq  C a^{-j(J+2n)}\int_{\bf M} \left[1+d(x,z)/a^j\right]^{-(n+2)}\left[1+d(y,z)/a^j\right]^{-(n+2)}d\mu(z)\\\notag&\leq  C a^{-j(J+n)} \left[1+d(x,y)/a^j\right]^{-(n+2)}
\end{align}
by (\ref{ptestm2}).
\ \\
Say $x \neq y$ , and let $j_0$ be the integer satisfying $a^{j_0}\leq d(x,y)\leq {a^{j_0+1}}$.
Then  we have

 \begin{align}\notag
 d(x,y)^{J+n}K_J(x,y)  &\leq C \sum_{j} a^{-j(J+n)}     d(x,y)^{J+n}[1+d(x,y)/a^j]^{-n-2}\\\notag
 &\leq  C\left[ \sum_{j\geq j_0}       a^{-j(J+n)} a^{j_0(J+n)} + \sum_{j<j_0}  a^{-j(J+n)} a^{j_0(J+n)} a^{(n+2)(j-j_0)}\right]\\\notag
 &\leq
  C\left[ \sum_{j\geq j_0}     a^{-(j_0-j)(J+n)} + \sum_{j< j_0}   a^{(j_0-j)(J-2)}  \right]\\\notag
&\leq C.
 \end{align}
 since we are assuming that $0 \leq J \leq 1$.  This establishes the claim.\\
\ \\
We return now to (\ref{sumopdf2}).
By definition of ${\mathcal M}_{x,t}$, a function $\varphi \in {\mathcal M}_{x,t}$
if and only if $t^{\deg Y} Y\varphi \in {\mathcal N}_{x,t}$ for every $Y \in {\mathcal P}_0$.
We apply this with $t = a^j$ ($j \in \ZZ$).
It therefore follows from the ``claim'' that, whenever
 $X$ (acting in the $x$ variable) and $Y$ (acting in the
$y$ variable) are in ${\mathcal P}_0$, then
\begin{equation}
\label{drvadup}
\sum_{j}\sum_k  \mu(E_{j,k})  \left| X\psi_{j,k}(x)Y\overline{\varphi}_{j,k}(y) \right|
\leq  C_3d(x,y)^{-(n+\deg X + \deg Y)}.
\end{equation}

This has a number of immediate consequences.  Taking $X$ and $Y$ to be identity maps, we
now see that
\begin{equation}
\label{kersfnd}
K(x,y) = \sum_{j}\sum_k  \mu(E_{j,k}) \psi_{j,k}(x)\overline{\varphi}_{j,k}(y)
\end{equation}
is the kernel of $S$, in the sense that if $F \in C^1$, then
\[ [SF](x) = \int_{\bf M} K(x,y)F(y) d\mu(y) \]
whenever $x$ is outside the support of $F$.  (This is because, for fixed $x$,
the series defining $K$ converges absolutely for $y$ in the support of $F$, and the
sum of the absolute value of the terms is bounded independently of $y$.)
Secondly, since we could, in local
coordinates on a $U_i$, take $X$ in (\ref{drvadup}) to be any $\partial/\partial x_l$
(and $Y$ to be any $\partial/\partial y_l$), we see that $K(x,y)$ is $C^1$ off the
diagonal.  Finally we see that we may bring derivatives
past the summation sign in (\ref{kersfnd}), and thus, for any $X, Y \in {\mathcal P}_0$,
\[ \left|XYK(x,y)\right| \leq C_3 d(x,y)^{-(n+\deg X + \deg Y)} \]
for $x \neq y$. \\

This shows that $S$ satisfies conditions $(ii)$ and $(iii)$ of Theorem \ref{tof1q};
we need to show it satisfies the other conditions.  By (a), it is evident that if
$F,G \in C^1({\bf M})$, then
\begin{equation}\notag
 \langle  SF, G  \rangle  =
\sum_{j}\sum_k \mu(E_{j,k}) \langle  F, \varphi_{j,k}  \rangle
 \langle  \psi_{j,k}, G  \rangle ,
\end{equation}
where the sum converges absolutely.
Consequently, the formal adjoint of $S$ is $S^*$, where, if $G \in C^1$,
\[ S^*G = \sum_{j}\sum_k \mu(E_{j,k}) \langle  G, \psi_{j,k}  \rangle  \varphi_{j,k}. \]
Surely $S1= S^*1 = 0$, since $\int \varphi_{j,k} = \int \psi_{j,k} = 0$ for all $j,k$.

That leaves only condition $(i)$ of Theorem \ref{tof1q}.  Suppose then that
$\omega$ is a bump function for the ball $B(x_0,R)$,
where $R \leq 2(\diam{\bf M})$.  If $d(x,x_0) \geq 2R$, then

\begin{align}\notag
\left|[S \omega ](x)\right|&\leq \int \left| K (x,y)\omega(y)\right|d\mu(y)\\\notag
&\leq C \int_{d(y,x_0) \leq R} d(x,y)^{-n}dy  \\\notag
&\leq CR^n\max_{d(y,x_0)  \leq R} d(x,y)^{-n}  \\\notag
&\leq CR^n d(x,x_0)^{-n}.  \end{align}
Recalling (a), we now see that

\begin{align}\notag
\|S\omega\|_2^2 &= \int_{d(x,x_0)<2R} |[S \omega ](x)|^2 d\mu(x)
+ \int_{d(x,x_0) \geq 2R}  |[S \omega ](x)|^2 d\mu(x)\\\notag
&\leq
C\left[(2R)^n+  R^{2n}\int_{d(x,x_0) \geq 2R} d(x,x_0)^{-2n}d\mu(x)\right]\\\notag
&\leq
C[ R^n + R^{2n} R^{-n} ]= CR^n,
\end{align}
by (\ref{ptestm1}).  Hence
\begin{align}\notag
\|S\omega\|_2\leq   CR^{n/2},
\end{align}
This establishes condition $(i)$ of Theorem \ref{tof1q}, and completes
the proof of part (b).
\end{proof}
\begin{proof}[Proof of (c)]
It is evident, by (a), that (c) holds for $F \in C^1({\bf M})$.

Suppose now that $\mathcal{F}\subset \ZZ^2$ is finite, and define
$S^{\mathcal{F}}: L^2 \rightarrow C^1$ by
\begin{align}\notag
S^{\mathcal{F}}F=
\sum_{(j,k) \in {\cal F}} \mu(E_{j,k}) \langle  F, \varphi_{j,k}  \rangle  \psi_{j,k}
\end{align}
By (b), $S^{\mathcal{F}}: L^2 \rightarrow L^2$ is bounded, with norm
$\|S^{\mathcal{F}}\| \leq C_2$
{\em for all } $N$.  (Indeed, in the formula for $S$,
we have just replaced the $\varphi_{j,k}$ by the
zero function if $j,k \notin {\cal F}$, and the zero function is surely in
all ${\mathcal M}_{x_{j,k},a^j}$.)  Since (c) is true for $F \in C^1$, which
is dense in $L^2$, it now follows for all $F \in L^2$.
\end{proof}
\begin{proof}[Proof of (d)]

This follows at once from (c), since a series of complex numbers conveges
absolutely if and only if it converges unconditionally.

This completes the proof of Theorem \ref{sumopthm}.\end{proof}

%
We are now almost ready to justify the transition from (\ref{kerfaj}) to (\ref{kerfajap}).
In order to do so, we need to choose the $E_{j,k}$ rather carefully.  We will need to
utilize the following simple fact:

\begin{itemize}
\item Say $t > 0$.  Then there exists a finite covering of ${\bf M}$ by
disjoint measurable sets $E_1,\ldots,E_N$, such that whenever $1 \leq k \leq N$, there
is a $y_k \in {\bf M}$ with $B(y_k,t) \subseteq E_k \subseteq B(y_k,2t)$.  Thus, by
(\ref{ballsn}), there is a constant $c_0^{\prime}$, depending only on ${\bf M}$, such that
$\mu(E_k) \geq c_0^{\prime}(2t)^n$, provided $t < \delta$.\\
\ \\
To see this, choose a maximal collection of disjoint balls $\{B(y_k,t): 1 \leq k \leq N\}$,
set $B_k = B(y_k,t)$, and set $B_k^{\prime} = B(y_k,2t)$.  Since the collection
is maximal, each point $y \in {\bf M}$ is at distance less than $t$ from some point in
$\cup_{k=1}^N B_k$, so the distance from $y$ to some $y_k$ is less than $2t$.  Therefore
the $B_k^{\prime}$ cover ${\bf M}$, and we need only set $E_1 =
B_1^{\prime}\setminus (B_2 \cup \ldots \cup B_N)$, and recursively, for $2 \leq k \leq N$,
$E_k = B_k^{\prime} \setminus (E_1 \cup \ldots E_{k-1} \cup B_{k+1} \cup \ldots B_N)$.
\vspace{.3cm}

We conclude:
\item
Say $c_0^{\prime}$ is as in the last bullet point, and say $0 < b < 1$.  Then,
for each $j \in \ZZ$ with $ba^j < 2\delta$, we
can write ${\bf M}$ as a finite disjoint union of measurable sets
$\{E_{j,k}\}$, where the diameter of each $E_{j,k}$ is less than or equal to $ba^j$, and where, for each
$j$ with $ba^j < 2\delta$, $\mu(E_{j,k}) \geq c_0(ba^j)^n$. \\
\ \\
This follows from the last bullet point, with $t = ba^j/2$.\\
\end{itemize}
The following theorem now justifies the transition from (\ref{kerfaj}) to (\ref{kerfajap}), and
in particular implies Theorem \ref{framain0}.
\begin{theorem}
\label{framain}
$(a)$ Fix $a >1$, and say $c_0 , \delta_0 > 0$.  Suppose $f \in {\mathcal S}(\RR^+)$,
and $f(0) = 0$.
Then there exists a constant $C_0 > 0$ $($depending only on ${\bf M}, f, a, c_0$ and $\delta_0$$)$
as follows:\\
Let ${\mathcal J} \subseteq \ZZ$ be such that for some $N$, $j \in {\mathcal J}$
whenever $|j| > N$.  $($We will often take ${\mathcal J} = \ZZ$.$)$
For $t > 0$, let $K_t$ be the kernel of $f(t^2\Delta)$.
For $x,y \in {\bf M}$, set
\[ \Phi_{x,t}(y) = \overline{K}_t(x,y). \]
Say $0 < b < 1$.
Suppose that, for each $j \in {\mathcal J}$, we can write ${\bf M}$ as a finite disjoint union of measurable sets
$\{E_{j,k}: 1 \leq k \leq N_j\}$,
where:
\begin{equation}
\label{diamleq}
\mbox{the diameter of each } E_{j,k} \mbox{ is less than or equal to } ba^j,
\end{equation}
and where:
\begin{equation}
\label{measgeq}
\mbox{for each } j \mbox{ with } ba^j < \delta_0,\: \mu(E_{j,k}) \geq c_0(ba^j)^n.
\end{equation}
$($By the last bullet point, we
can surely do this if $c_0 \leq c_0^{\prime}$ and $\delta_0 \leq 2\delta$.$)$
Select $x_{j,k} \in E_{j,k}$ for each $j,k$.
By Lemma $\ref{manmol}$, there is a constant $C$ $($independent of the
choice of $b$ or the $E_{j,k}$$)$, such that $\Phi_{x_{j,k},a^j}\in C{\mathcal M}_{x_{j,k},a^j}$
for all $j,k$.  Thus, if for $1 \leq k \leq N_j$, we set
\[ \varphi_{j,k} = \psi_{j,k} =
\begin{cases}
\Phi_{x_{j,k},a^j} & \mbox{ if}\;\; j \in {\mathcal J},  \cr
0  & \text{otherwise},  \cr
\end{cases}
\]
we may thus form the summation operator $S^{\mathcal J}$ with

\begin{equation}\notag
S^{\mathcal J}F = S_{\{\varphi_{j,k}\},\{\psi_{j,k}\}}F =
\sum_{j}\sum_k \mu(E_{j,k}) \langle  F, \varphi_{j,k}  \rangle  \psi_{j,k}.
\end{equation}
 and Theorem $\ref{sumopthm}$ applies.\\

Let $Q^{\mathcal J} = \sum_{j \in {\mathcal J}} |f|^2(a^{2j} T)$
(strong limit, as guaranteed by $(\ref{gelmay2})$.)
  Then for all $F \in L^2({\bf M})$,

\begin{equation}
\label{qsclose}
\left| \langle  (Q^{\mathcal J}-S^{\mathcal J})F,F  \rangle \right| \leq C_0b  \langle  F,F  \rangle
\end{equation}
$($or, equivalently, since $Q^{\mathcal J}-S^{\mathcal J}$ is self-adjoint,
$\|Q^{\mathcal J}-S^{\mathcal J}\| \leq C_0b$.$)$\\
$(b)$ In $(a)$, take ${\mathcal J} = \ZZ$; set $Q =
Q^{\ZZ}$, $S = S^{\ZZ}$.  Use the notation of $(\ref{gelmay2})$, where now
$T = \Delta$ on $L^2({\bf M})$;
suppose in particular that the Daubechies condition $(\ref{daub})$ holds.
Then,
if $P$ denotes the projection in $L^2({\bf M})$ onto the space of constants, we have
\begin{equation}
\label{snrtgt}
(A_a-C_0b)(I-P) \leq S \leq (B_a + C_0b)(I-P)
\end{equation}
as operators on $L^2({\bf M})$.  Thus, for any $F \in (I-P)L^2({\bf M})$,
\begin{equation}\notag
(A_a-C_0b)\|F\|^2 \leq \sum_{j,k} \mu(E_{j,k})| \langle  F,\varphi_{j,k}  \rangle |^2
\leq (B_a + C_0b)\|F\|^2,
\end{equation}
so that, if $A_a - C_0b > 0$, then
$\left\{ \mu(E_{j,k})^{1/2}\varphi_{j,k}\right\}_{j,k}$ is a frame for $(I-P)L^2({\bf M})$,
with frame bounds $A_a - C_0b$ and $B_a + C_0b$.
\end{theorem}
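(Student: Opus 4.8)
Part (b) follows from part (a): granting $\|Q-S\|\le C_0b$, combine it with the operator form $(\ref{gelmay2})$ of Daubechies' condition, $A_a(I-P)\le Q\le B_a(I-P)$, noting that $S$, like $Q$, annihilates constants and has range in $(I-P)L^2({\bf M})$ because each $\varphi_{j,k}=\psi_{j,k}$ has integral zero; this gives $(\ref{snrtgt})$, and Theorem $\ref{sumopthm}$(d) with $G=F$ gives $\langle SF,F\rangle=\sum_{j,k}\mu(E_{j,k})|\langle F,\varphi_{j,k}\rangle|^2$, the asserted frame inequality with $\phi_{j,k}=\mu(E_{j,k})^{1/2}\varphi_{j,k}$. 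For part (a), take $F$ smooth (the general case follows by density, since $Q^{\mathcal J}$ and $S^{\mathcal J}$ are bounded) and set $g_j=f(a^{2j}\Delta)F$, so $g_j(x)=\langle F,\Phi_{x,a^j}\rangle$. The spectral theorem identifies the $j$-th summand of $Q^{\mathcal J}$ as $|f|^2(a^{2j}\Delta)$, with quadratic form $\int_{\bf M}|g_j|^2\,d\mu$ at $F$, whereas the $j$-th summand of $S^{\mathcal J}$ has quadratic form $\sum_k\mu(E_{j,k})|g_j(x_{j,k})|^2$; hence $\langle(Q^{\mathcal J}-S^{\mathcal J})F,F\rangle=\sum_{j\in{\mathcal J}}\sum_k\int_{E_{j,k}}\big(|g_j(x)|^2-|g_j(x_{j,k})|^2\big)\,d\mu(x)$, which is precisely the error made in passing from $(\ref{kerfaj})$ to $(\ref{kerfajap})$. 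We may assume $b<b_0$ for a fixed $b_0>0$, since for $b\ge b_0$ the crude bound $\|Q^{\mathcal J}-S^{\mathcal J}\|\le B_a+C_2$ already gives $(\ref{qsclose})$ with a larger $C_0$; then split the $j$-sum at $ba^j=\delta$.

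On the large scales $ba^j\ge\delta$ (so $a^j\ge\delta/b$ is large), the rapid decay of $f$ takes over: since $f(0)=0$ and $\lambda_1>0$, $\||f|^2(a^{2j}\Delta)\|=\sup_{m\ge1}|f(a^{2j}\lambda_m)|^2$ decays faster than any power of $a^{-j}$, and so does $\|S_j\|$, because $\|S_jF\|_2\le\mu({\bf M})\|F\|_2\sup_x\sum_{m\ge1}|f(a^{2j}\lambda_m)|^2|u_m(x)|^2$, which decays rapidly by Weyl's law together with a crude bound on $\|u_m\|_\infty$. Summing the resulting geometric series over $a^j\ge\delta/b$ bounds the large-scale part of the error by $O(b)\|F\|^2$.

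The small scales carry the content. For $ba^j<\delta$ each $E_{j,k}$ sits in a coordinate patch of Proposition $\ref{ujvj}$ with uniformly controlled geometry; joining $x_{j,k}$ to $x\in E_{j,k}$ by a coordinate polygon or a minimizing geodesic $\gamma$ of length $\le Cd(x,x_{j,k})\le Cba^j$, the fundamental theorem of calculus gives $g_j(x)-g_j(x_{j,k})=\int_0^1\dot\gamma(t)[g_j](\gamma(t))\,dt$, with $\dot\gamma(t)[g_j]=\sum_l c_l(t)(X_lg_j)(\gamma(t))$, $X_l\in{\mathcal P}$, $\sum_l|c_l(t)|\le Cba^j$. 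Since $(X_lg_j)(z)=\langle F,\ \overline{(X_l)_wK_{a^j}(w,\cdot)}\,|_{w=z}\rangle$ and $f(0)=0$ forces $\int_{\bf M}(X_l)_wK_t(w,y)\,d\mu(y)=(X_l)_w[f(t^2\Delta)\mathbf 1](w)=0$, Lemma $\ref{manmol}$ yields $a^j\,\overline{(X_l)_wK_{a^j}(w,\cdot)}|_{w=z}\in C{\mathcal M}_{z,a^j}$; the factor $a^{-j}$ cancels one power of $a^j$ from the path length, so after recentring via $(\ref{nrbyok})$ and using convexity of ${\mathcal M}$ we obtain $g_j(x)-g_j(x_{j,k})=b\,\langle F,\xi_{j,x}\rangle$ with $\xi_{j,x}\in C{\mathcal M}_{x,a^j}$. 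Writing $|g_j(x)|^2-|g_j(x_{j,k})|^2=(g_j(x)-g_j(x_{j,k}))\overline{g_j(x)}+g_j(x_{j,k})\overline{(g_j(x)-g_j(x_{j,k}))}$ and recalling $g_j(x)=\langle F,\Phi_{x,a^j}\rangle$ with $\Phi_{x,a^j}\in C{\mathcal M}_{x,a^j}$, the small-scale error becomes $b$ times a sum of expressions $\sum_{j,k}\int_{E_{j,k}}\langle F,\alpha_{j,x}\rangle\langle\beta_{j,x},F\rangle\,d\mu(x)$ with $\alpha_{j,x},\beta_{j,x}\in C{\mathcal M}_{x,a^j}$ (in one, $\beta_{j,x}$ is even constant on each $E_{j,k}$). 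Each such expression is $\langle{\mathcal R}F,F\rangle$ for a summation operator ${\mathcal R}$ with $\|{\mathcal R}\|\le C$: the one with $\beta_{j,x}$ locally constant reduces, after averaging $\xi_{j,x}$ over $E_{j,k}$, to a summation operator of exactly the type in Theorem $\ref{sumopthm}$, and the others to the continuous analogue of that theorem — the identical $T(1)$ argument, or a limit of Theorem $\ref{sumopthm}$ over arbitrarily fine refinements of $\{E_{j,k}\}$, the bound persisting because it is independent of the partition. Hence the small-scale error is $O(b)\|F\|^2$, and $(\ref{qsclose})$ follows.

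I expect the main obstacle to be this last mechanism: arranging the fundamental-theorem-of-calculus expansion so that exactly one clean factor of $b$ is produced — the interplay of $\diam E_{j,k}\le ba^j$ with the $a^{-j}$ gained on differentiating $K_{a^j}$ — and then recognizing the Riemann-sum error as (a limit of) summation operators controlled by Theorem $\ref{sumopthm}$, all while keeping the $(j,k)$-series absolutely convergent so that the interchanges of summation with the $[0,1]$-integration are legitimate. The large-scale tail, by contrast, is routine once the rapid decay of $f$ is invoked.
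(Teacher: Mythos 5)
Your structure matches the paper's: proving (b) from (a) is identical; splitting the $j$-sum at the scale $ba^j\approx\delta_0$ is exactly what the paper does (with $\Omega_b=\log_a(\delta_0/b)$); bounding the large-scale tail by the rapid decay of $f$ and summing a geometric series to extract $O(b)$ is the same estimate (the paper uses $\|\Phi_{x,t}\|_2^2\leq C/t$ directly, which is simpler than routing through Weyl's law, but both work); and the small-scale mechanism — FTC along a short coordinate segment, $a^jX_wK_{a^j}$ satisfying the same molecular estimates as $K_{a^j}$ so that $\diam E_{j,k}\leq ba^j$ produces the desired single factor of $b$, and then controlling the remaining bilinear sum by molecular summation operators — is precisely the paper's idea.

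The one genuine gap is the step you flag yourself: after FTC, you face $\sum_{j,k}\int_{E_{j,k}}\langle F,\alpha_{j,x}\rangle\langle\beta_{j,x},F\rangle\,d\mu(x)$ with molecules depending continuously on $x$, and you propose either re-running the $T(1)$ argument in continuous form or passing to a limit of refinements; neither is carried out, and the refinement route is delicate because the molecules themselves change as you refine. The paper sidesteps this: it changes variables $x=x_{j,k}+(ba^j/c_1)w$ so the $x$-integral over $E_{j,k}$ becomes an integral over the \emph{fixed} unit ball $\mathcal{B}$ (in $w$), with Jacobian and cutoff packaged into a weight $G_{j,k}(w)=\frac{(ba^j)^n}{c_1^n\mu(E_{j,k})}H_{j,k}(w)$. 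The hypothesis $(\ref{measgeq})$ is used exactly here to make $G_{j,k}$ uniformly bounded, so that $\sqrt{G_{j,k}(w)}$ can be absorbed into the molecules, producing for each fixed $(w,s)\in\mathcal{B}\times[0,1]$ a \emph{genuine discrete} summation operator $S_{\{\varphi^{w,s}_{j,k}\},\{\psi^{w,s}_{j,k}\}}$ to which Theorem $\ref{sumopthm}$ applies with a uniform bound; integrating $dw\,ds$ over a fixed compact set then gives $|I_i|\leq Cb\|F\|^2$ with no new boundedness theorem required. In short, the paper's normalization is what lets you reuse the discrete Theorem $\ref{sumopthm}$ verbatim, and it is where $(\ref{measgeq})$ enters; your sketch does not invoke $(\ref{measgeq})$ at all, which is a signal that the "continuous analogue" step is doing unacknowledged work that would need to be spelled out.
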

{\bf Note}
By (\ref{daubest}), $B_a/A_a = 1 + O(|(a-1)^2 (\log|a-1|)|)$; evidently
$(B_a + C_0b)/(A_a - C_0b)$ can be made arbitrarily close to $B_a/A_a$ by
choosing $b$ sufficiently small.  So we have constructed ``nearly tight" frames
for $(I-P)L^2({\bf M})$.
\begin{proof}
  We prove (a).  To simplify the notation, in the proof of (a), $j$ will
always implicitly be restricted to lie in ${\mathcal J}$, unless otherwise explicitly stated.

Since $\delta_0$ only occurs in (\ref{measgeq}), we may assume $\delta_0 \leq \delta$; for otherwise, we may
replace $\delta_0$ by $\delta$, and (\ref{measgeq}) still holds.

Since $Q^{\mathcal J}$ and $S^{\mathcal J}$ are bounded operators, we need only show (\ref{qsclose})
for the dense subspace $C^1({\bf M})$.  For $b > 0$, we let $\Omega_b =
\log_a (\delta_0/b)$, so that $j < \Omega_b$ is equivalent to
$ba^j < \delta_0$.
Observe that
\begin{align}\notag
\left| \langle  (Q^{\mathcal J}-S^{\mathcal J})F,F  \rangle \right|
& = \left|\sum_j \int_{\bf M} \left| \langle  \Phi_{x,a^j},F  \rangle  \right|^2 d\mu(x)
- \sum_j \sum_k \mu(E_{j,k})\left| \langle  \Phi_{x_{j,k},a^j},F  \rangle  \right|^2\right|\\\notag
& \leq I + II,
\end{align}
where
\[ I = \left| \sum_{j < \Omega_b}\left[\int_{\bf M} \left| \langle  \Phi_{x,a^j},F  \rangle  \right|^2 d\mu(x) -
\sum_k \mu(E_{j,k})\left| \langle  \Phi_{x_{j,k},a^j},F  \rangle  \right|^2\right] \right|, \]
and
\[ II = \sum_{j \geq \Omega_b} \int_{\bf M} | \langle  \Phi_{x,a^j},F  \rangle  |^2 d\mu(x)
+ \sum_{j \geq \Omega_b} \sum_k \mu(E_{j,k})| \langle  \Phi_{x_{j,k},a^j},F  \rangle  |^2. \]

For $II$, we need only recall that, for any $L > 0$,
\begin{equation}
\label{ktinf}
\lim_{t \rightarrow \infty} t^L  K_t = 0 \mbox{ in } C^{\infty}({\bf M} \times {\bf M}).
\end{equation}
(This follows at once from the eigenfunction expansion of $K_t$; see Section 4 of \cite{gmcw},
especially (\ref{kerexp}) of that article, and the comments that directly follow it.)
In particular, there exists $C > 0$ such that, for any $x$ and any $t > 1$,
$\|\Phi_{x,t}\|_2^2 \leq C/t$.  Accordingly,

\[ II\leq C \sum_{j \geq \Omega_b} a^{-j}\mu({\bf M})\|F\|_2^2 \leq Ca^{-\Omega_b}\|F\|_2^2
= Cb\|F\|_2^2. \]
\ \\
Thus we may focus our attention on $I$.  We have
\begin{align}\notag
I & = \left| \sum_{j < \Omega_b}\sum_k \left[\int_{E_{j,k}} \left| \langle  \Phi_{x,a^j},F  \rangle  \right|^2 d\mu(x) -
\sum_k \mu(E_{j,k})\left| \langle  \Phi_{x_{j,k},a^j},F  \rangle  \right|^2\right] \right|,\\
& = \left| \sum_{j < \Omega_b}\sum_k \int_{E_{j,k}}
\left[\left| \langle  \Phi_{x,a^j},F  \rangle  \right|^2 - \left| \langle  \Phi_{x_{j,k},a^j},F  \rangle  \right|^2\right]d\mu(x) \right|.
\label{iprep}
\end{align}
For each $(j,k)$ with $j < \Omega_b$ and
$1 \leq k \leq N_j$, we select $i_{j,k}$
such that $B(x_{j,k},3\delta) \subseteq U_{i_{j,k}}$.  (Since then
$\diam E_{j,k} \leq ba^j <  \delta$, we then also have that
$E_{j,k} \subseteq B(x_{j,k},\delta) \subseteq U_{i_{j,k}}$.)  For each $i$
with $1 \leq i \leq N$, we let ${\mathcal S}_i$ denote the set of
$(j,k)$, with $1 \leq k \leq N_j$, such that $i_{j,k} = i$.
The summation in (\ref{iprep}) may then be written as $\sum_{i=1}^N I_i$, where

\begin{equation}
\label{iiprep}
I_i = \sum_{(j,k) \in {\mathcal S}_i} \int_{E_{j,k}}
\left[\left| \langle  \Phi_{x,a^j},F  \rangle  \right|^2 - \left| \langle  \Phi_{x_{j,k},a^j},F  \rangle  \right|^2\right] d\mu(x).
\end{equation}
For (a), we only need show that each $|I_i| \leq C\|F\|^2_2$.

Let us then fix $i$ and use local coordinates on $U_i$.  In these coordinates,
$U_i$ is a ball in $\RR^n$.  On $U_i$, we may write $d\mu(x)
= h(x)dx$, where $h$ is smooth, positive and bounded above.  By (\ref{rhoeuccmp}),
if $(j,k) \in {\mathcal S}_i$, then $E_{j,k} \subseteq B(x_{j,k},ba^j)
\subseteq \left\{y \in \RR^n: |x-x_{j,k}| < ba^j/c_1\right\}$.  In the
integrand in (\ref{iiprep}), we may therefore
write $x=x_{j,k} + (ba^j/c_1)w$ for some $w$ in the open unit ball in $\RR^n$,
which we denote by ${\mathcal B}$.   Changing variables in (\ref{iiprep}), we
now see that

\begin{align}\notag
I_i & = \sum_{(j,k) \in {\mathcal S}_i} \int_{B(x_{j,k},ba^j)}
\left[\left| \langle  \Phi_{x,a^j},F  \rangle  \right|^2 - \left| \langle  \Phi_{x_{j,k},a^j},F  \rangle  \right|^2\right]
\left[\chi_{E_{j,k}}h\right](x)dx \\
&= \sum_{(j,k) \in {\mathcal S}_i} \left(ba^j/c_1\right)^n \int_{\mathcal B}
\left[\left| \langle  \Phi_{x_{j,k} + (ba^j/c_1)w,a^j},F  \rangle \right|^2
- \left| \langle  \Phi_{x_{j,k},a^j},F  \rangle  \right|^2\right] H_{j,k}(w) dw, \label{iifst}
\end{align}

where
\begin{equation}\notag
H_{j,k}(w) = [\chi_{E_{j,k}}h](x_{j,k} + (ba^j/c_1)w).
\end{equation}
(This is interpreted as zero if $x_{j,k} + (ba^j/c_1)w \notin E_{j,k}$.
Moreover the integrand in (\ref{iifst}) is interpreted as zero at any point $w$
where $H_{j,k}(w) = 0$.)
Thus

\begin{equation}
\label{iiscd}
 I_i = \sum_{(j,k) \in {\mathcal S}_i} \mu(E_{j,k})
\int_{\mathcal B}
\left[\left| \langle  \Phi_{x_{j,k} + (ba^j/c_1)w,a^j},F  \rangle \right|^2
- \left| \langle  \Phi_{x_{j,k},a^j},F  \rangle  \right|^2\right] G_{j,k}(w) dw,
\end{equation}
where
\[ G_{j,k}(w) = \frac{(ba^j)^n}{c_1^n \mu(E_{j,k})} H_{j,k}(w) \]
Note that, by (\ref{measgeq}), there is a constant $C$ such that
$0 \leq G_{j,k}(w) \leq C$ for all $(j,k) \in {\mathcal S}_i$ and all $w \in
{\mathcal B}$. (Again, the integrand in (\ref{iiscd}) is interpreted as zero at any point
$w$ where $G_{j,k}(w) = 0$.)

Applying the fundamental theorem of calculus, and recalling that $U_i$ (being
a ball) is convex, we now see that

\begin{equation}
\label{iibtr}
I_i = \sum_{(j,k) \in {\mathcal S}_i} \mu(E_{j,k}) \int_{\mathcal B}
\int_0^1 \frac{\partial}{\partial s}\left| \langle  \Phi_{x_{j,k} + (ba^j/c_1)sw,a^j},F  \rangle \right|^2 ds\: G_{j,k}(w) dw.
\end{equation}
For $(j,k) \in {\mathcal S}_i$, $w \in {\mathcal B}$ and $0 \leq s \leq 1$,
$y \in {\bf M}$, let us set
\begin{align}\notag
\varphi_{j,k}^{w,s}(y) & = \sqrt{G_{j,k}(w)}\Phi_{x_{j,k} + (ba^j/c_1)sw,a^j}(y) \\\notag
&= \sqrt{G_{j,k}(w)}\overline{K}_{a^j}(x_{j,k} + (ba^j/c_1)sw,y)
\end{align}
(interpreted at zero if $G_{j,k}(w) = 0$.)
Also note that, for $y \in {\bf M}$,
\begin{align}\notag
\sqrt{G_{j,k}(w)}\frac{\partial}{\partial s} \Phi_{x_{j,k} + (ba^j/c_1)sw,a^j}(y)
& = \sqrt{G_{j,k}(w)}\frac{\partial}{\partial s} \overline{K}_{a^j}(x_{j,k} + (ba^j/c_1)sw,y)\\\notag
& = b\psi_{j,k}^{w,s}(y)
\end{align}
where
\[ \psi_{j,k}^{w,s}(y)= \left[\sqrt{G_{j,k}(w)}/c_1\right]
\sum_{m=1}^n w_m \left[a^j\frac {\partial \overline{K}_{a^j}}{\partial x_m}
(x_{j,k} + (ba^j/c_1)sw,y)\right]. \]
Now, if $w$ is such that $G_{j,k}(w) \neq 0$, then $x_{j,k} + (ba^j/c_1)w \in E_{j,k}
\subseteq U_i$, so $x_{j,k} + (ba^j/c_1)sw \in U_i$ for all $0 \leq s \leq 1$.
Since $|[x_{j,k} + (ba^j/c_1)sw] - x_{j,k}| \leq a^j/c_1$, by
(\ref{rhoeuccmp2}), we have that $d(x_{j,k} + (ba^j/c_1)sw, x_{j,k}) \leq (c_2/c_1)a^j$.
By (\ref{diagest}) and (\ref{nrbyok}), there is consequently a $C > 0$ such that
\[ \varphi_{j,k}^{w,s}, \psi_{j,k}^{w,s} \in C{\mathcal M}_{x_{j,k},a^j} \]
for all $(j,k) \in {\mathcal S}_i$, $w \in {\mathcal B}$ and $0 \leq s \leq 1$.
Now let us set $\varphi_{j,k}^{w,s}, \psi_{j,k}^{w,s} \equiv 0$ whenever $(j,k)
\notin {\mathcal S}_i$ (here $j$ runs from $-\infty$ to
$\infty$, and $k$ runs from $1$ to $N_j$). We now find from (\ref{iibtr}) that
\begin{align}\notag
I_i & = b\sum_{(j,k) \in {\mathcal S}_i} \mu(E_{j,k}) \int_{\mathcal B}
\int_0^1 \left[ \langle  \psi_{j,k}^{w,s},F  \rangle   \langle  F, \varphi_{j,k}^{w,s} \rangle  +
 \langle  \varphi_{j,k}^{w,s},F  \rangle   \langle  F,\psi_{j,k}^{w,s} \rangle \right] dsdw\\\notag
& = b \int_{\mathcal B}
\int_0^1 \left[ \langle  S_{\{\varphi_{j,k}^{w,s}\},\{\psi_{j,k}^{w,s}\}}F,F  \rangle
+  \langle  S_{\{\psi_{j,k}^{w,s}\},\{\varphi_{j,k}^{w,s}\}}F,F  \rangle \right] dsdw\\\notag
& \leq Cb\|F\|^2
\end{align}
by Theorem \ref{sumopthm}.  (The interchange of order of summation and integration
is justified by the dominated convergence theorem and the second sentence of
Theorem \ref{sumopthm} (a).)   This proves (a).

To prove (b), we need only show (\ref{snrtgt}).  But from
(\ref{qsclose}) and (\ref{gelmay2}), we have that, if $F = (I-P)F \in L^2({\bf M})$, then
\[ (A_a - C_0b)\|F\|^2 \leq
 \langle  QF, F  \rangle  - C_0b\|F\|^2 \leq
 \langle  SF, F  \rangle  \leq  \langle  QF, F  \rangle  + C_0b\|F\|^2
\leq (B_a + C_0b)\|F\|^2. \]
If, on the other hand, $F \in L^2({\bf M})$ is general, we have $SF = S(I-P)F$,
since all $\varphi_{j,k}$ have integral zero.  Since
$S$ is self-adjoint, $ \langle  SF, F  \rangle  =  \langle  S(I-P)F, (I-P)F  \rangle $, so in
general
\[ (A_a - C_0b)\|(I-P)F\|^2 \leq  \langle  SF, F  \rangle  \leq
(B_a + C_0b)\|(I-P)F\|^2, \]
as desired.
\end{proof}

In Theorem \ref{framain} (b) $\{\mu(E_{j,k})^{1/2}\varphi_{j,k}\}$ is a frame.
This is, of course, an infinite set, and so for practical purposes, we must explain which
terms in the summation

\begin{equation}
\label{srepr}
SF = \sum_j \sum_k \mu(E_{j,k})  \langle  F,\varphi_{j,k}  \rangle  \varphi_{j,k}
\end{equation}
are so small that they can be neglected.  As we stated in the introduction, our plan is
first to do a frequency analysis. (Assuming, roughly, that $F$ is well-localized in
frequency, we describe which $j$ can be neglected).  This will reduce us to a finite sum.
Then we will do a spatial analysis. (Assuming, roughly, that $F$ is well-localized in
space, we describe which $k$ can be neglected for each of our now finite set of $j$).

\subsection{Frequency Analysis}

We begin with some motivation for the result we seek.

Even without using frequency analysis, it is not difficult to see that the terms with
$j$ large and positive will contribute little to the sum (\ref{srepr}).
Indeed, for any $L > 0$,
there exists $c_L > 0$ such that if $F \in L^2$, then
\[ C_{j,L} := | \langle  F, \varphi_{j,k} \rangle | \leq \|F\|_2\|\varphi_{j,k}\|_2 \leq
c_L a^{-Lj}\|F\|_2, \]
by (\ref{ktinf}).
Following (\ref{sumsetup}) through (\ref{ccjrhere}) with $C_{j,L}$ in place of $C_{j,R}$,
we see that for each $j$,
\begin{equation}
\label{adupok}
\sum_k \mu(E_{j,k}) \left|  \langle  F,  \varphi_{j,k} \rangle \right| \:\left| \varphi_{j,k}(y)\right|
\leq   C  C_{j,L}.
\end{equation}

Taking  the sum over $j > N$, say, we have:
\begin{equation}
\label{expdec}
\sum_{j > N} \sum_k   \mu(E_{j,k}) \left|  \langle  F,  \varphi_{j,k} \rangle  \right|\:\left|
\varphi_{j,k}(y) \right|  \leq  C   \sum_{j > N} C_{j,L}
\leq  C_L \|F\|_2 a^{-LN},
\end{equation}
which goes to zero rapidly as $N \rightarrow \infty$.  Here we need no information
about $F$ except its $L^2$ norm.\\
\ \\
On the other hand, it is not hard to see that if $F$ has a little smoothness, then the
terms with $-j$ large and positive will contribute little to the sum (\ref{srepr}).
If, for instance,
$F$ is H\"older continuous of exponent $\alpha \in (0,1)$, the arguments of
(\ref{goholder}) through (\ref{endholder}) show that $| \langle  F, \varphi_{j,k} \rangle |
\leq C a^{j\alpha}$, so if we argue as in (\ref{adupok}) and (\ref{expdec}) above,
we see that
\[ \sum_{j < -M} \sum_k  \mu(E_{j,k}) \left|  \langle  F,  \varphi_{j,k} \rangle  \right|\: \left|
\varphi_{j,k}(y) \right| \leq C \sum_{j < -M} a^{j\alpha} \leq C a^{-M\alpha},  \]
which also goes to zero rapidly as $M \rightarrow \infty$.\\
\ \\
We leave the detailed study (through frames) of smoothness spaces to a later article (\cite{gm3}).
Here we instead look at the terms, with $|j|$ large, through frequency analysis.

Specifically, say $F = \sum a_l u_l$ is the expansion of $F$ with respect to the
orthonormal basis of eigenfunctions $\{u_l\}$.  We expect that some knowledge of the decay
of the $\{a_l\}$ should imply something about the decay of the terms in (\ref{srepr})
as $j \rightarrow -\infty$.  After all, for instance, if the $a_l$ decay quickly enough,
then $F$ is $C^1$.

Our next result gives us such information, in considerable generality.  Moreover,
the fact that the terms in (\ref{srepr}) become negligible as $j \rightarrow
\infty$, will be seen to be a consequence of the fact that, if $m \geq 1$, then the
eigenvalues of $\Delta$ satisfy $\lambda_m \geq \lambda_1 > 0$.  The proof of the result
will depend on Lemma \ref{gelmayquant} of our earlier article \cite{gmcw}.

\begin{theorem}
\label{freqthm}
In the situation of Theorem $\ref{framain}$  $(b)$,
suppose in fact that $f$ has the form $f(s) = s^l f_0(s)$ for some
integer $l \geq 1$ and $f_0 \in {\cal S}(\RR^+)$, $(f_0 \not\equiv 0)$.
Suppose $J \geq 1$ is an integer, and let $M_J = \max_{r > 0}|r^J f(r)|$.
For any $L > 0$,
we consider the spectral projectors $P_{[0,L]}$ $($so that, if $F = \sum a_m u_m
\in L^2$, then $P_{[0,L]} F = \sum_{\lambda_m \leq L} a_m u_m)$.
Then if $F \in L^2$,
and any $M,N \geq 0$, we have
\begin{equation}
\label{freqest}
\|SF - \sum_{j=-M}^N \sum_k \mu(E_{j,k})  \langle  F,\varphi_{j,k}  \rangle  \varphi_{j,k}\|_2
\leq ( C_0b + \frac{c^{\prime}_L}{a^{4Ml}} + \frac{C_J^{\prime}}{a^{4NJ}})\|F\|_2 +
2B_a \|(I - P_{[0,L]})F\|_2,
\end{equation}
where $c^{\prime}_L = (L^{2l}\|f_0\|^2_{\infty})/(a^{4l}-1)$, and
$C^{\prime} = M_J^2/[(a^{4J}-1)\lambda_1^{2J}]$.\\
\end{theorem}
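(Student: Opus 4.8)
The plan is to compare the truncated sum with the full summation operator $S$, and both of these with the corresponding spectral multipliers, using Theorem \ref{framain}(a) to absorb the discretization error in one stroke. Write $S_{M,N}F = \sum_{j=-M}^N\sum_k \mu(E_{j,k})\langle F,\varphi_{j,k}\rangle\varphi_{j,k}$; this is precisely the operator $S^{\mathcal J}$ of Theorem \ref{framain}(a) for the finite index set $\mathcal J = \{-M,\dots,N\}$. Set $Q = \sum_{j\in\ZZ}|f|^2(a^{2j}\Delta)$ and $Q_{M,N} = \sum_{j=-M}^N|f|^2(a^{2j}\Delta)$. The key observation is that $S - S_{M,N} = S^{\mathcal J'}$ and $Q - Q_{M,N} = Q^{\mathcal J'}$, where $\mathcal J' = \ZZ\setminus\{-M,\dots,N\}$ still satisfies the hypothesis of Theorem \ref{framain}(a) (with $N' = \max(M,N)$ one has $j\in\mathcal J'$ whenever $|j|>N'$). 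Hence, applying Theorem \ref{framain}(a) to $\mathcal J'$ gives $\|(S - S_{M,N}) - (Q - Q_{M,N})\| \le C_0 b$, so that
\[
\|SF - S_{M,N}F\|_2 \le C_0 b\,\|F\|_2 + \|(Q - Q_{M,N})F\|_2
\]
for all $F \in L^2(\mathbf M)$. This produces the $C_0 b$ term of (\ref{freqest}) and reduces everything to estimating the spectral multiplier $Q - Q_{M,N}$, whose symbol at the eigenvalue $\lambda_m$ is $\sigma_M(\lambda_m) + \sigma^N(\lambda_m)$, where $\sigma_M(\lambda) = \sum_{j<-M}|f(a^{2j}\lambda)|^2$ and $\sigma^N(\lambda) = \sum_{j>N}|f(a^{2j}\lambda)|^2$.

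Next I would split $F = P_{[0,L]}F + (I - P_{[0,L]})F$, noting that all operators in sight annihilate constants (since $f(0)=0$), so one may restrict attention to the $u_m$ with $m\ge 1$, for which $\lambda_m \ge \lambda_1 > 0$. For the high-frequency part I use only the crude bound $\|Q - Q_{M,N}\| \le \|Q\| + \|Q_{M,N}\| \le 2B_a$, which follows from the Daubechies condition (\ref{daub}) and the spectral theorem since both $g(\lambda)$ and $g_{M,N}(\lambda)\le g(\lambda)$ are bounded by $B_a$; this yields $\|(Q - Q_{M,N})(I - P_{[0,L]})F\|_2 \le 2B_a\|(I - P_{[0,L]})F\|_2$, which is the last term of (\ref{freqest}). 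For the low-frequency part, on each $\lambda_m$ with $\lambda_1 \le \lambda_m \le L$ I estimate the two tails separately: from $f(s) = s^l f_0(s)$ one gets $|f(a^{2j}\lambda_m)|^2 \le \|f_0\|_\infty^2(a^{2j}\lambda_m)^{2l}$, so summing the geometric series over $j < -M$ gives $\sigma_M(\lambda_m) \le \|f_0\|_\infty^2 L^{2l}\sum_{j<-M}a^{4lj} = c'_L\, a^{-4lM}$; and from $|f(r)| \le M_J r^{-J}$ one gets $|f(a^{2j}\lambda_m)|^2 \le M_J^2(a^{2j}\lambda_m)^{-2J}$, so summing over $j > N$ and using $\lambda_m \ge \lambda_1$ gives $\sigma^N(\lambda_m) \le M_J^2\lambda_1^{-2J}\sum_{j>N}a^{-4Jj} = C'_J\, a^{-4JN}$. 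Hence $\|(Q - Q_{M,N})P_{[0,L]}F\|_2 \le (c'_L a^{-4lM} + C'_J a^{-4JN})\|P_{[0,L]}F\|_2 \le (c'_L a^{-4lM} + C'_J a^{-4JN})\|F\|_2$, and adding the three pieces gives (\ref{freqest}). The quantitative passage from these pointwise symbol bounds to operator-norm bounds on the relevant spectral subspaces is exactly what Lemma \ref{gelmayquant} of \cite{gmcw} supplies, and I would cite it at this step.

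The only genuinely delicate point is organizing the first step so that the discretization error is counted once rather than twice: one must notice that $S - S_{M,N}$ is itself of the form $S^{\mathcal J'}$ for an admissible $\mathcal J'$ and feed that difference directly into Theorem \ref{framain}(a), instead of bounding $\|SF - QF\|_2$ and $\|S_{M,N}F - Q_{M,N}F\|_2$ separately (which would produce $2C_0 b$). Everything else is a routine geometric-series computation; the only care is to track which inequalities need $\lambda_m \le L$ (the $\sigma_M$ estimate) and which need $\lambda_m \ge \lambda_1$ (the $\sigma^N$ estimate, and the fact that the constant eigenfunction drops out).
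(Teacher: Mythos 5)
Your proposal is correct and follows essentially the same route as the paper: the observation that $SF - S_{M,N}F = S^{\mathcal{J}'}F$ for $\mathcal{J}'=\{j:j<-M \text{ or } j>N\}$, allowing Theorem \ref{framain}(a) to be applied once to produce the single $C_0 b$ term, is precisely the paper's first step, and the remaining spectral estimate $\|Q^{\mathcal{J}'}F\|_2\leq (c'_L a^{-4Ml}+C'_J a^{-4NJ})\|F\|_2 + 2B_a\|(I-P_{[0,L]})F\|_2$ is exactly what Lemma \ref{gelmayquant} of \cite{gmcw} provides, which the paper simply cites (with $\eta=\lambda_1$) while you additionally sketch its proof by splitting $F=P_{[0,L]}F+(I-P_{[0,L]})F$ and summing geometric series.
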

\begin{proof} First note, that in proving this result, we may assume $F = (I-P)F$;
otherwise, replace $F$ by $(I-P)F$ (this will not affect the left side of
(\ref{freqest}), and will not increase the right side).  It then follows that
$(I - P_{[0,L]})F = (I - P_{[\lambda_1,L]})F$.

Let ${\mathcal J} = \{j \in \ZZ: j < -M \mbox{ or } j > N\}$.  Then
the left side of (\ref{freqest}) is simply $\|S^{\mathcal J}F\|_2$, in the notation
of Theorem \ref{framain} (a).  By that result, the left side of
(\ref{freqest}) is less than or equal to $C_0 b\|F\|_2 +\|Q^{\mathcal J}F\|_2$.
But in the notation of Lemma \ref{gelmayquant} of \cite{gmcw},
 $Q^{\mathcal J} = h(\Delta) - h_{M,N}(\Delta)$.
Thus the theorem now follows at once from Lemma \ref{gelmayquant} of \cite{gmcw} (with $\eta$
in that lemma equalling $\lambda_1$).  This completes the proof.\end{proof}

Thus, say one wants to compute $SF$ to a certain precision.  One calculates
the finite sum
\begin{equation}
\label{MNappS}
\sum_{j=-M}^N \sum_k \mu(E_{j,k})  \langle  F,\varphi_{j,k}  \rangle  \varphi_{j,k}
\end{equation}
for $M, N$ sufficiently large;
how large must one take them to be?  One first chooses $b, J, L, N$ so that the
first, third and fourth terms on the right side of (\ref{freqest}) are
very small.  Then one chooses $M$ to make the second term on the right side of
(\ref{freqest}) very small as well.

We emphasize that $C_J^{\prime}$ in (\ref{freqest}) does not depend on $L$, so that
the negligibility of the terms as $j \rightarrow \infty$ depends only on
$\|F\|_2$, as we observed above.  $C^{\prime}$ of course depends on $f, a$ and also on
the manifold ${\bf M}$ (more specifically, on the least positive eigenvalue $\lambda_1$).

\subsection{Spatial Analysis}
The summation in (\ref{MNappS}) is a finite sum, but,
as we have motivated in the Introduction, if we only want to know this
summation to within a certain precision, then it may well be possible to further reduce
the number of terms that need to be considered, provided
$F$ is well-localized in space.

In order to carry out the analysis, it is convenient to first make a general remark.
 Let ${\cal I} \subseteq \{(j,k): 1 \leq k \leq N_j\}$ be arbitrary.    Let
\begin{equation}
\label{scalidf}
S_{\cal I}F =
\sum_{(j,k) \in \cal I} \mu(E_{j,k})  \langle  F,\varphi_{j,k}  \rangle  \varphi_{j,k}.
\end{equation}
Then $S_{\cal I}$ is a positive operator on $L^2({\bf M})$; let $\sqrt{S_{\cal I}}$
be its positive square root.  For any $F \in L^2$,
\[ \|\sqrt{S_{\cal I}} F\|_2^2 =  \langle  S_{\cal I} F, F  \rangle
= \sum_{j,k \in {\cal I}}\mu(E_{j,k})| \langle  F,\varphi_{j,k} \rangle |^2
\leq B \|F\|^2_2, \]
where $B$ is an upper frame bound for the frame $\{\mu(E_{j,k})^{1/2}\varphi_{j,k}\}$.
Accordingly, $\|\sqrt{S_{\cal I}}\| \leq \sqrt{B}$, so
\begin{equation}
\label{nrok2}
\|S_{\cal I}\| \leq B.
\end{equation}
Also, for any $F \in L^2$,
$\|S_{\cal I}F\|_2 \leq \|\sqrt{S_{\cal I}}\| \|\sqrt{S_{\cal I}} F\|_2$, so
\begin{equation}
\label{nrok}
\|S_{\cal I}F\|_2^2 \leq B \langle  S_{\cal I}F, F  \rangle .
\end{equation}
Thus $S_{\cal I}F$ will have small $L^2$ norm provided $ \langle  S_{\cal I}F, F  \rangle $
is sufficiently small.  Accordingly, the terms in (\ref{MNappS}) corresponding to
$(j,k) \in {\cal I}$ can be neglected, provided $ \langle  S_{\cal I}F, F  \rangle $
is sufficiently small.\\
\ \\
We may now prove out result on spatial analysis:
\begin{theorem}
\label{spacest}
In the situation of Theorem $\ref{framain}$ $(b)$, say $I > 0$.
Then there exists a constant $C_1 > 0$
$($depending only on $I, f$ and ${\bf M})$, as follows.

Let $B$ be an upper frame bound for the frame $\left\{ \mu(E_{j,k})^{1/2}\varphi_{j,k}\right\}$.
Say $M,N \geq 0$, $x_0 \in {\bf M}$, and $R > 0$.
Let $\chi$ be the characteristic function of a set $\Gamma \subseteq {\bf M}$.
For each $j$ with $-M \leq j \leq N$, let $c_j > 0$ be a constant, and let
\[ {\mathcal I} = \left\{(j,k):\; -M \leq j \leq N,~ 1 \leq k \leq N_j, ~d(x_{j,k},\Gamma)
\leq (c_j + 1) a^j
\right\} \]
Then
\begin{align}\notag
 &\left\|\sum_{j=-M}^N \sum_k \mu(E_{j,k})  \langle  F,\varphi_{j,k}  \rangle  \varphi_{j,k}
- \sum_{j,k \in {\mathcal I}}\mu(E_{j,k})  \langle  F,\varphi_{j,k}  \rangle  \varphi_{j,k}\right\|_2
  \\
 \leq&  C_1B^{1/2}\left[\mu(\Gamma) \sum_{j=-M}^N a^{-jn}c_j^{n-2I}\right]^{1/2} \|\chi F\|_2
+ B\|(1-\chi)F\|_2 \label{spacesteq}
\end{align}
for all $F \in L^2({\bf M})$.
\end{theorem}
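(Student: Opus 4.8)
The plan is to split $F$ according to whether it is supported on $\Gamma$ or on its complement, and then exploit the general remark preceding the theorem: an index set $\mathcal I$ (here the \emph{complement} of the stated $\mathcal I$ within $\{(j,k): -M\le j\le N\}$) can be neglected provided $\langle S_{\mathcal I}F,F\rangle$ is small, since by $(\ref{nrok})$ we have $\|S_{\mathcal I}F\|_2^2\le B\langle S_{\mathcal I}F,F\rangle$. Write $F=\chi F+(1-\chi)F$. For the $(1-\chi)F$ piece, the left side of $(\ref{spacesteq})$ is controlled by $\|S_{\mathcal I^c}(1-\chi)F\|_2$ where $\mathcal I^c$ is the neglected set; this is bounded by $\|S_{\mathcal I^c}\|\,\|(1-\chi)F\|_2\le B\|(1-\chi)F\|_2$ by $(\ref{nrok2})$, which is exactly the second term on the right side. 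So it remains only to estimate the contribution of $\chi F$, i.e.\ to show $\langle S_{\mathcal I^c}(\chi F),\chi F\rangle\le C_1^2 B\,\mu(\Gamma)\sum_{j=-M}^N a^{-jn}c_j^{n-2I}\,\|\chi F\|_2^2$, where $\mathcal I^c$ consists of those $(j,k)$ with $-M\le j\le N$ and $d(x_{j,k},\Gamma)>(c_j+1)a^j$.

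For this, expand $\langle S_{\mathcal I^c}(\chi F),\chi F\rangle=\sum_{(j,k)\in\mathcal I^c}\mu(E_{j,k})\,|\langle \chi F,\varphi_{j,k}\rangle|^2$. The key point is that if $(j,k)\in\mathcal I^c$ and $y\in\Gamma$, then $d(x_{j,k},y)\ge d(x_{j,k},\Gamma)>(c_j+1)a^j\ge c_j a^j$, so $\varphi_{j,k}$ is small on $\Gamma$. Precisely, since $\varphi_{j,k}=\Phi_{x_{j,k},a^j}\in C\mathcal M_{x_{j,k},a^j}$ by Lemma~\ref{manmol}, the molecule estimate (taking $N=2I$ in the definition of $\mathcal M_{x,t}$, with $Y$ the identity) gives $|\varphi_{j,k}(y)|\le C a^{-jn}[d(x_{j,k},y)/a^j]^{-2I}=C a^{-jn}a^{2Ij}d(x_{j,k},y)^{-2I}$ for $y\in\Gamma$. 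Hence, by Cauchy--Schwarz on $\Gamma$,
\[
|\langle \chi F,\varphi_{j,k}\rangle|^2\le \|\chi F\|_2^2\int_\Gamma|\varphi_{j,k}(y)|^2\,d\mu(y)\le C^2\|\chi F\|_2^2\, a^{-2jn}a^{4Ij}\int_\Gamma d(x_{j,k},y)^{-4I}\,d\mu(y).
\]
Using $d(x_{j,k},y)>c_j a^j$ on $\Gamma$, the integral is at most $\mu(\Gamma)(c_j a^j)^{-4I}=\mu(\Gamma)c_j^{-4I}a^{-4Ij}$; this requires only $4I>0$, which holds.

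Assembling: $\mu(E_{j,k})\le C(ba^j)^n\le C a^{jn}$ (using $(\ref{measgeq})$ for small $j$ and the trivial bound $\mu(E_{j,k})\le\mu({\bf M})$ for the finitely many remaining $j$), so for each $(j,k)\in\mathcal I^c$,
\[
\mu(E_{j,k})\,|\langle\chi F,\varphi_{j,k}\rangle|^2\le C\,a^{jn}\cdot a^{-2jn}a^{4Ij}\mu(\Gamma)c_j^{-4I}a^{-4Ij}\,\|\chi F\|_2^2 = C\,\mu(\Gamma)\,a^{-jn}c_j^{-4I}\,\|\chi F\|_2^2.
\]
Wait --- this gives $c_j^{-4I}$, not $c_j^{n-2I}$; the discrepancy is resolved by also summing in $k$ before invoking the pointwise decay. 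Indeed $\sum_k\mu(E_{j,k})|\varphi_{j,k}(y)|^2$ should be handled as a single sum: for fixed $j$ and $y\in\Gamma$ with all relevant $x_{j,k}$ at distance $>c_j a^j$, we have $\sum_{k:(j,k)\in\mathcal I^c}\mu(E_{j,k})|\varphi_{j,k}(y)|^2\le C a^{-jn}\sum_k\mu(E_{j,k})[1+d(y,x_{j,k})/a^j]^{-2n}\cdot(\text{extra decay factor }c_j^{-(2I-n)}\cdot\text{something})$; comparing the sum over $k$ to $\int_{\bf M}[1+d(y,x)/a^j]^{-2n}d\mu(x)\le C a^{jn}$ via $(\ref{alcmpN})$ absorbs one power $a^{jn}$, and tracking the power of $c_j$ that the constraint $d(y,x_{j,k})>c_ja^j$ produces yields $c_j^{n-2I}$. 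So the refined step is: bound $\sum_{k:(j,k)\in\mathcal I^c}\mu(E_{j,k})|\langle\chi F,\varphi_{j,k}\rangle|^2$ by first applying Cauchy--Schwarz and then using $(\ref{ptestm})$-type estimates to convert the $k$-sum into $a^{-jn}c_j^{n-2I}\mu(\Gamma)\|\chi F\|_2^2$ up to a constant, finally summing over $j$ from $-M$ to $N$. Taking square roots and combining with the $(1-\chi)F$ estimate gives $(\ref{spacesteq})$, with $C_1$ depending only on $I$, $f$ (through the molecule constant $C$ of Lemma~\ref{manmol} and the constant $c_0$, though $c_0$ is fixed by the situation of Theorem~\ref{framain}(b)) and ${\bf M}$.

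The main obstacle is the bookkeeping in this last step: one must organize the double sum so that summing in $k$ first (comparing to an integral over ${\bf M}$ and invoking $(\ref{alcmpN})$ and $(\ref{ptestm})$) extracts the correct power $a^{-jn}$ and simultaneously the correct power $c_j^{n-2I}$ of the localization radius, rather than the naive $c_j^{-4I}$ one gets by handling each $k$ in isolation. Choosing the exponent $N$ in the molecule definition large enough (any $N\ge 2I$ works, increasing $C_1$) and being careful that the sum $\sum_k\mu(E_{j,k})[\cdots]$ is genuinely controlled \emph{uniformly in $j$} by $C a^{jn}$ is the delicate point; everything else is a routine application of Cauchy--Schwarz, the molecule estimates of Lemma~\ref{manmol}, and $(\ref{nrok})$--$(\ref{nrok2})$.
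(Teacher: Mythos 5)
Your overall strategy coincides with the paper's: write the left side as $\|S_{{\mathcal I}'}F\|_2$ with ${\mathcal I}'$ the complementary index set, split $F = \chi F + (1-\chi)F$, dispose of the $(1-\chi)F$ part via (\ref{nrok2}), and then bound $\langle S_{{\mathcal I}'}(\chi F),\chi F\rangle$ using (\ref{nrok}). That much is correct, and you also correctly diagnose that the crude $(j,k)$-by-$(j,k)$ estimate must fail, because it produces a per-term bound $C\,\mu(\Gamma)a^{-jn}c_j^{-4I}\|\chi F\|_2^2$ with no control over the number of $k$'s, so summing in $k$ introduces an extra uncontrolled factor (roughly $a^{-jn}$) that destroys the claimed estimate.

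However, the crucial step --- extracting the factor $a^{-jn}c_j^{n-2I}$ from the $k$-sum --- is not actually carried out. Your text at this point contains the placeholder ``$\cdot\text{something}$'', and the paragraph that follows is a description of what one would like to do, not a proof. This is where the real content of the theorem lives, so the proposal as written has a genuine gap. The paper's argument at this step is: after Cauchy--Schwarz, one has
\[
\langle S_{{\mathcal I}'}(\chi F),\chi F\rangle
\le C_I^2\,\|\chi F\|_2^2\int_\Gamma \sum_{(j,k)\in{\mathcal I}'} a^{-2jn}\mu(E_{j,k})\bigl(1+d(x_{j,k},y)/a^j\bigr)^{-2I}\,d\mu(y).
\]
For fixed $j$, set $E_j=\bigcup_{\{k:(j,k)\in{\mathcal I}'\}}E_{j,k}$. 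One does \emph{not} bound $\mu(E_{j,k})$ from above; instead one uses (\ref{alcmpN}) (diameter $\le a^j$, so $M=1$, costing a factor $2^{2I}$) to replace $x_{j,k}$ by a running point $x\in E_{j,k}$, turning $\sum_k\mu(E_{j,k})(1+d(x_{j,k},y)/a^j)^{-2I}$ into $2^{2I}\int_{E_j}(1+d(x,y)/a^j)^{-2I}d\mu(x)$. This conversion is exactly what keeps the estimate independent of the number of $k$'s. Next, the constraint $(j,k)\in{\mathcal I}'$ forces $d(x,y)\ge d(x_{j,k},y)-d(x_{j,k},x)>c_ja^j$ for every $x\in E_j$ and $y\in\Gamma$, so the inner integral is over $\{x:d(x,y)>c_ja^j\}$ where $(1+d(x,y)/a^j)^{-2I}\le (d(x,y)/a^j)^{-2I}$; applying (\ref{ptestm1}) with exponent $2I$ and radius $c_ja^j$ gives
\[
\int_{\{x:d(x,y)>c_ja^j\}}(d(x,y)/a^j)^{-2I}\,d\mu(x)\le C_I'\,a^{jn}\,c_j^{\,n-2I},
\]
whence $a^{-2jn}\cdot a^{jn}c_j^{\,n-2I}=a^{-jn}c_j^{\,n-2I}$. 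Integrating over $\Gamma$ produces $\mu(\Gamma)$, summing over $j$ and then invoking (\ref{nrok}) yields the first term of (\ref{spacesteq}). This chain is precisely the ``bookkeeping'' you declared to be the main obstacle; until it is written out, the proposal does not establish the theorem. Also note that this argument (and hence the stated bound) is only meaningful when $2I>n$, which (\ref{ptestm1}) requires; your bare hypothesis ``$4I>0$'' is not enough to invoke it.
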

\begin{proof} Let ${\mathcal I}' =
\left\{(j,k):\; -M \leq j \leq N, ~1 \leq k \leq N_j, ~d(x_{j,k},\Gamma) > (c_j + 1) a^j \right\}$;
then, in the
notation of (\ref{scalidf}), the left side of (\ref{spacesteq}) is $\|S_{{\mathcal I}'}F\|_2$.
Surely
\begin{equation}
\label{omcest}
\|S_{{\mathcal I}'}F\|_2 \leq \|S_{{\mathcal I}'}(\chi F)\|_2 + \|S_{{\mathcal I}'}([1-\chi]F)\|_2
\leq \|S_{{\mathcal I}'}(\chi F)\|_2 + B\|(1-\chi)F\|_2
\end{equation}
by (\ref{nrok2}).  On the other hand, if $(j,k) \in {\mathcal I}'$, then
\begin{eqnarray*}
\left| \langle  \chi F, \varphi_{j,k}  \rangle \right|^2 & \leq &
\left[\int_{\Gamma} \left|(\chi F)(y)\right| \left|\varphi_{j,k}(y)\right| d\mu(y)\right]^2\\
& \leq & \left[\int_{\Gamma} \left|\varphi_{j,k}(y)\right|^2 d\mu(y)\right] \|\chi F\|_2^2\\
& \leq & C_I^2 a^{-2jn}\left[\int_{\Gamma} \left(1 + d(x_{j,k},y)/a^j\right)^{-2I} d\mu(y)\right] \|\chi F\|_2^2.
\end{eqnarray*}
For each $j$ with $-M \leq j \leq N$,
let $E_j = \underset{{\{k:\;(j,k) \in {\mathcal I}'\}}}{\cup}E_{j,k}$.  If $x \in E_j$, then $x \in E_{j,k}$
for some $(j,k)$, so for each $y \in \Gamma$, $d(x,y) \geq d(x_{j,k},y) - d(x_{j,k},x)
> c_j a^j$.  Thus, by (\ref{alcmpN}) and then (\ref{ptestm1}), we see that

\begin{eqnarray*}
 \langle  S_{{\mathcal I}'}(\chi F),(\chi F)  \rangle
  & = & \sum_{(j,k) \in {\mathcal I}'} \mu(E_{j,k})| \langle  \chi F, \varphi_{j,k}  \rangle |^2 \\
& \leq & C_I^2 \int_{\Gamma}
\sum_{(j,k) \in {\mathcal I}'} a^{-2jn}
\mu(E_{j,k})\left(1 + d(x_{j,k},y)/a^j\right)^{-2I} d\mu(y) ~\|\chi F\|_2^2\\
& \leq & C_I^2 2^{2I}\sum_{j=-M}^N a^{-2jn}\int_{\Gamma} \int_{E_j}
\left(1 + d(x,y)/a^j\right)^{-2I} d\mu(x) d\mu(y)~ \|\chi F\|_2^2\\
& \leq & C_I^2 2^{2I}\sum_{j=-M}^N a^{-2jn}\int_{\Gamma} \int_{\left\{x:\: d(x,y) > c_j a^j\right\}}
\left(d(x,y)/a^j\right)^{-2I} d\mu(x) d\mu(y) ~\|\chi F\|_2^2\\
& \leq & C_I' C_I^2 2^{2I}\sum_{j=-M}^N a^{-jn}c_j^{n-2I}
\int_{\Gamma} d\mu(y)~ \|\chi F\|_2^2\\
& \leq & C_I'C_I^2 2^{2I}\sum_{j=-M}^N a^{-jn}c_j^{n-2I}\mu(\Gamma) ~\|\chi F\|_2^2\\
\end{eqnarray*}
Accordingly, by (\ref{nrok}) we find
\begin{equation}\notag
\|S_{{\mathcal I}'}(\chi F)\|_2 \leq
C_1 (B \mu(\Gamma))^{1/2}\left[\sum_{j=-M}^N a^{-jn}c_j^{n-2I}\right]^{1/2} \|\chi F\|_2.
\end{equation}
with $C_1$ depending only on $I, f$ and ${\bf M}$.
If we combine this with (\ref{omcest}), we see that the proof is complete.\end{proof}

In all, say one wants to compute $SF$ to a certain precision.  By using frequency
analysis, one can reduce the problem to computing
$\sum_{j=-M}^N \sum_k \mu(E_{j,k})  \langle  F,\varphi_{j,k}  \rangle  \varphi_{j,k}$
to a certain precision.  If $F$ is well-localized, by  Theorem \ref{spacest}, one
can further reduce the problem to computing a sum
$\sum_{j,k \in {\mathcal I}}\mu(E_{j,k})  \langle  F,\varphi_{j,k}  \rangle  \varphi_{j,k}$, by
first choosing the set $\Gamma$ so that $B\|(I-\chi)F\|_2$ is small, and then
choosing the numbers $c_j$ so that $\sum_{j=-M}^N a^{-jn}c_j^{n-2I}$ is sufficiently
small.  This will entail making the numbers $c_j$ sufficiently large.  The needed
computations will increase as $j$ gets smaller, since each $a^{-jn}c_j^{n-2I}$
needs to be small (forcing larger requirements on $c_j$ as $j$ gets smaller),
and since the number of $k$ with $(j,k) \in {\mathcal I}$ also
increases as $j$ gets smaller.  Thus it is best to use frequency analysis to make
$M$ as small as possible.  This will be most practical if $F$ has a little smoothness.

\section{Needlets and Mexican Needlets on the Sphere}

In order to implement our frames, one of course would need to calculate approximately the frame elements
\begin{equation}\notag
\phi_{j,k} = \mu(E_{j,k})^{1/2}~\overline{K_{a^j}}(x_{j,k},x),
\end{equation}
for some collection of $E_{j,k}$ satisfying (\ref{diamleq}), (\ref{measgeq}) (for any given $b \in (0,1)$)
and for some choice of $x_{j,k} \in E_{j,k}$.  In Section 6 of \cite{gmcw}, we focused on the cases
where ${\bf M}$ is the torus $\TT^2$ or the sphere $S^2$.  In these cases it is not difficult
to explicitly choose suitable $E_{j,k}$.  Moreover, in Section 6 of \cite{gmcw}, we gave explicit
approximate formulas for $K_t(x,y)$, if $f(s) = se^{-s}$ (the Mexican hat case), for the torus and
for the sphere, which therefore should make implementation feasible.
In each case ($\TT^2$ or $S^2$), we gave two approximate formulas for $K_t(x,y)$, one which converges
quickly for large $t$, and one which converges quickly for small $t$.  In this section we focus on the
sphere $S^2$, where related work has been done by Narcowich, Petrushev and Ward (\cite{narc1}, \cite{narc2}).
We shall compare our methods with theirs, in detail.

Specifically, on $S^2$, in \cite{gmcw} we noted that $K_t(Tx,Ty) = K_t(x,y)$ for any orthogonal
transformation $T$.  From this we concluded that $K_t(x,y)$ was actually a function of $x \cdot y$,
say
\begin{equation}\notag
h_t(x \cdot y) = K_t(x,y).
\end{equation}
From the eigenfunction expansion of $K_t$, we found that, for $-1 \leq y_1 \leq 1$,
if $y = (y_1,\ldots,y_3) \in S^2$,
\begin{equation}
\label{kersph3}
4\pi h_t(y_1) = K_t({\bf N},y) = \sum_{l=0}^{\infty} (2l+1)f(t^2l(l+1)) P_l^{1/2}(y_1).
\end{equation}

Here $P_l^{1/2}$ is a Gegenbauer polynomial.  This series converges quickly when $t$ is large.  When $t$ is small,
and $f(s) = se^{-s}$, we obtained the approximation
\begin{equation}
\label{htapp}
4\pi h_t(\cos \theta) \sim \frac{e^{-\theta^2/4t^2}}{t^2}[(1-\frac{\theta^2}{4t^2})p(t,\theta)-t^2q(t,\theta)],
\end{equation}
where
\begin{equation}\notag
p(t,\theta)=1+\frac{t^2}{3}+\frac{t^4}{15}+\frac{4t^6}{315}+\frac{t^8}{315}+
\frac{\theta^2}{4}(\frac{1}{3}+\frac{2t^2}{15}+\frac{4t^4}{105}+\frac{4t^6}{315})
\end{equation}
and
\begin{equation}\notag
q(t,\theta)= \frac{1}{3}+\frac{2t^2}{15}+\frac{4t^4}{105}+\frac{4t^6}{315}+
\frac{\theta^2}{4}(\frac{2}{15}+\frac{8t^2}{105}+\frac{4t^4}{105})
\end{equation}
Maple says that when $t=.1$, the error in the approximation (\ref{htapp}) is never more than $9.5 \times 10^{-4}$
for any $\theta \in [-\pi,\pi]$, even though both sides have a maximum of about 100.
(To obtain rigorous bounds on the error is research in progress, which we expect to complete soon.)
One can derive similar types of approximations if instead $f(s) = s^r e^{-s}$ for any integer $r \geq 1$,
by applying repeated $t$ derivatives to the above formulas.  (See the remark at the end of Section 6
of \cite{gmcw}.) If in (\ref{htapp})
we approximate $p \sim 1$ and $q \sim 0$, we would obtain the formula for the usual Mexican hat wavelet
on the real line, as a function of $\theta$.  This is to be expected, since on $\RR^n$,
the Mexican hat wavelet is a multiple of $\Delta e^{-\Delta} \delta$, the
Laplacian of a Gaussian, the
function whose Fourier transform is $|\xi|^2 e^{-|\xi|^2}$.  Figure 1 is a graph,
obtained by using Maple, of $4\pi\:h_t(\cos \theta)$,
for $t = 0.1$, with $\theta$ going from $-\pi$ to $\pi$ on the horizontal axis.
  \begin{figure}
  \begin{center}
  \begin{tabular}{c}
  \includegraphics[scale=0.6]{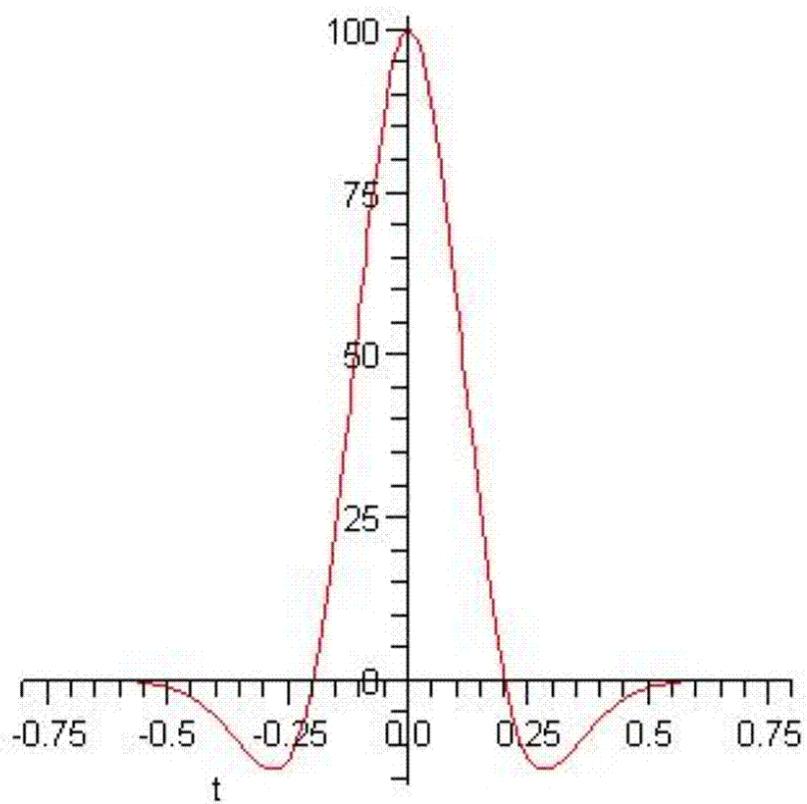}
  \end{tabular}
 \caption{\label{fig 5} $4\pi h_t(\cos \theta)$ on $S^2$ for $t=0.1$}
  \end{center}
  \end{figure}

The related frames of Narcowich, Petrushev and Ward have
been dubbed ``needlets'', and used by astrophysicists to study cosmic microwave background radiation.
(See, for instance, \cite{baldi}, \cite{guil} and the references therein.)  We will therefore call
the frame we obtain when $f(s) = se^{-s}$ (or, more generally, $f(s) = s^r e^{-s}$), by the name
{\em Mexican needlets}.  The formulas above suggest that Mexican needlets have strong Gaussian decay at each
scale, and that (at least for small $r$) they do not oscillate to an extent that would make implementation,
directly on the sphere, prohibitive.
\\

We now give a detailed comparison of needlets and Mexican needlets; there
are advantages and disadvantages in each approach.  In this discussion,
${\cal H}$ will denote $L^2(S^n)$ and $P_l$ will denote the projection onto ${\cal H}_l$, the
space of spherical harmonics of degree $l$.

In \cite{narc1}, \cite{narc2}, the authors considered only ${\bf M} = S^n$.  In place of our $f$ they
consider only smooth $g$ with compact support within $[1/2,2]$
(we shall henceforth call such an $g$ a ``cutoff function'').  They also obtained their frames
from the kernel $K_t$ of an operator, $K_t({\bf N},y)$ being given by a formula similar
to our (\ref{kersph3}), but with $g(tl)$ in place of $f(t^2l(l+n-1))$. (Thus they were not actually considering
functions of $t^2\Delta$, or equivalently functions of $t\sqrt{\Delta}$,
but rather functions of $t{\cal M}$, where ${\cal M}$ is the first-order pseudodifferential
operator $\sum_l lP_l$.  This is a minor distinction, however.)  As we shall explain, the principal
advantages of using cutoff functions is that
the authors are then able to obtain tight frames, and that the frame elements on non-adjacent scales are orthogonal.
The principal disadvantages, as we shall explain, is that there is no reason to expect
explicit formulas on the sphere, Gaussian decay at each scale, or lack of oscillation for needlets.\\

Let us begin our detailed discussion of needlets by explaining how Narcowich, Petrushev and Ward obtain
a tight frame by using cutoff functions:
\begin{itemize}
\item[1.]
 One can easily choose a cutoff function $g$ so that
$\sum_{j=-\infty}^{\infty} |g(2^js)|^2 = 1$ for all $s > 0$ instead of this being only approximately true
(recall our (\ref{daub}) and (\ref{daubest})).
In fact, if $g$ is a cutoff function, there are only two nonvanishing terms in the sum for any given $s > 0$. \item[2.]
  For any $m \geq 0$, let ${\cal P}_m = \sum_{l=1}^m P_l$.
One can choose a finite set of {\em cubature points} $\{x_{m,i}\}$  and
positive numbers $\lambda_{m,i}$ such that for any $F \in {\cal P}_m{\cal H}$,
one has that $\int_{S^n} F = \sum_i \lambda_{m,i} F(x_{m,i})$.  Thus when
evaluating the integral of a function known to be in ${\cal P}_m{\cal H}$, one can use cubature
to evaluate the integral instead of approximating the integral by a Riemann sum (as we did in
passing from (\ref{kerfaj}) to (\ref{kerfajap})).
\end{itemize}

One can construct tight frames out of such $g$, the plan being as follows.  Suppose $\{\nu_l\}_{l=1}^{\infty}$
is a sequence of positive real numbers which increases to $\infty$, with no worse than polynomial growth.
Let ${\cal M}$ be the self-adjoint operator $\sum_{l=1}^{\infty} \nu_l P_l$.
(In \cite{narc1}, \cite{narc2}, the authors take ${\cal M} = \sum lP_l$ as in the last paragraph.
We instead would look at ${\cal M} = \sqrt{\Delta} = \sum \sqrt{l(l+n-1)}P_l$.)
For $t > 0$, let $K_t(x,y)$ be the kernel of $g(t{\cal M})$.
(We would choose $g(s) = f(s^2)$, so that $g(t\sqrt{\Delta}) = f(t^2\Delta)$.)
Since  $\sum_{j=-\infty}^{\infty} |g(2^js)|^2 = 1$,
for all $s > 0$, we have $\sum_{j=-\infty}^{\infty} |g(2^j{\cal M})|^2 = I-P_0$.  Thus, if
$F \in (I-P_0){\cal H}$, we find (analogously to (\ref{aasumba0}), (\ref{aasumba})) that
\begin{equation}
\label{nar1}
\|F\|_2^2 = \langle \sum_{j=-\infty}^{\infty} |g(2^j{\cal M})|^2 F,F \rangle
= \sum_{j=-\infty}^{\infty} \|g(2^j{\cal M}) F\|^2_2.
\end{equation}
But since $g$ is supported in $[1/2,2]$, we may, for each $j$, choose $l(j)$ with $g(2^j\nu_l) = 0$
whenever $l > l(j)$, so that $g(2^j{\cal M}): {\cal H} \rightarrow {\cal P}_{l(j)}{\cal H}$.
As is well known, for any $l$, the product of two elements of ${\cal P}_l{\cal H}$ is in
${\cal P}_{2l}{\cal H}$.  Accordingly $|g(2^j{\cal M}) F|^2 \in {\cal P}_{2l(j)}{\cal H}$, and so
$\|g(2^j{\cal M}) F\|^2_2 = \int |g(2^j{\cal M}) F|^2$ may be evaluated exactly by cubature
(instead of our having to approximate it by a Riemann sum, as in (\ref{kerfajap})).  We find that
\begin{equation}\notag
\|F\|_2^2 =  \sum_{j=-\infty}^{\infty} \|g(2^j{\cal M}) F\|^2_2 =
\sum_{j=-\infty}^{\infty} \sum_i \lambda_{2l(j),i} |g(2^j{\cal M}) F|^2(x_{2l(j),i})
= \sum_{j=-\infty}^{\infty} \sum_i |\langle F,\phi_{j,i} \rangle|^2,
\end{equation}
where
\begin{equation}
\label{vphkq}
\phi_{j,i}(y) = \sqrt{\lambda_{2l(j),i}}\: \overline{K}_{2^j}(x_{j,i},y)
\end{equation}
(analogously to (\ref{phjkdf})). The $\{\phi_{j,i}\}$ are therefore a normalized tight frame,
not just a nearly tight frame.  Moreover, the constraints on the supports of the dyadic dilates of $g$,
easily imply that frame elements at non-adjacent scales are orthogonal.\\

We turn to apparent disadvantages of needlets.  In the study of CMB, they are not implemented directly on the
sphere, for several reasons which we shall explain in a moment, including lack of usable formulas.
In order to evaluate the inner product $\langle F,\phi_{j,i} \rangle$, one needs the spherical harmonic expansion
of $F$; then one uses $g(t{\cal M})F = g(t\sum_l lP_l)F$ to evaluate the inner products.
(See the bottom of page 9 of \cite{guil}.)  In CMB there is a large region of missing data on the sphere,
called the ``sky cut'', arisng from interference from the brightness of the Milky Way.  Thus,
finding spherical harmonic coefficients, which depend essentially on the global behavior of $F$,
is problematic.  In contrast, when Mexican needlets are used, one can effectively evaluate the
inner products if $x_{j,i}$ is somewhat away from the sky cut (on the scale $j$), because of the Gaussian
decay of the $\varphi_{j,i}$ at each scale and the lack of oscillation of Mexican needlets.
(Here we are of course assuming that the approximation
(\ref{htapp}) is rigorously justified.)\\

As to whether effective formulas could someday be found for needlets that could be used
directly on the sphere, we are pessimistic, for the following (admittedly circumstantial) reasons.
If one takes the inverse Fourier transform of a cutoff
function, which resembles a characteristic function, then one would expect an answer
which looks like the inverse Fourier transform of a characteristic function, i.e., something that looks like
the oscillatory function
$\sin(x)/x$ (of course it must be in the Schwartz space).  One would expect something similar to happen on
the sphere, and in fact, if one takes $t = .1$ and $g(s)$ to be the cutoff function
$exp(-1/[9/16-(s-5/4)^2])$ for $s \in [1/2,2]$, Maple says that $4\pi h_t(\cos \theta)$ is as in Figure 2, which should
be contrasted with the better-behaved function in Figure 1, where we took $t=.1$ and $f(s)=se^{-s}$.)

\begin{figure}
  \begin{center}
  \begin{tabular}{c}
  \includegraphics[scale=0.5]{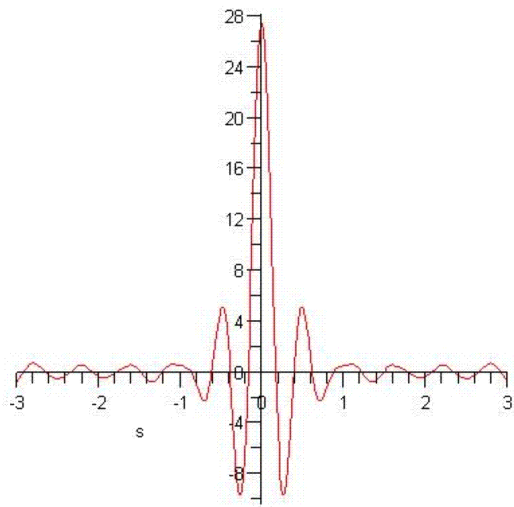}
  \end{tabular}
 \caption{\label{fig 7.pdf} A wavelet on $S^2$ obtained from a cutoff function ($t=0.1$)}
  \end{center}
  \end{figure}

  Note also that, since a cutoff function is not real analytic, its inverse Fourier transform cannot decay
exponentially; one would expect something similar on the sphere.  Also note that
on the real line, if one wants a function $g$ such that both
$g$ and $\check{g}$ are very small outside compact sets, the uncertainty principle says that a Gaussian is the best choice; one
would expect a similar phenomenon on the sphere.\\

Errors of one sort or another being unavoidable, it would be worthwhile to utilize both needlets and Mexican
needlets in the analysis of CMB, and the results should be compared.  We also
suggest that a ``hybrid'' approach
be attempted, combining the ideas of this section with those of
\cite{narc1}, \cite{narc2}.  Instead of using a cutoff function, we let $f(s) = se^{-s}$,
let $K_t$ be the kernel
of $f(t^2\Delta)$, and again define $\phi_{j,i}$ by (\ref{vphkq})
(for suitable $l(j)$).  We would then expect
(from (\ref{htapp})) that we will be able to evaluate the $\langle F,\phi_{j,i} \rangle$
without first finding the spherical harmonic decomposition of $F$.   However, the
$\{\phi_{j,i}\}$ are then only a nearly tight frame, for the following two reasons:

\begin{itemize}
\item[a)]
  We have only (\ref{daub}) and (\ref{daubest}) instead of point \#1 above.
 \item[b)] $f$ does not have compact support, so that cubature formulas will not exactly hold.
 \end{itemize}

However, these issues should lead only to very small errors.  For point a), we recall that
$B_a/A_a \rightarrow 1$ nearly quadratically in (\ref{daubest}).  We simply need to
use dilations by $a^j$ instead of $2^j$
for $j$ sufficiently close to $1$.  (As we have said, if $a = 2^{1/3}$, then
$B_a/A_a = 1.0000$ to four significant digits.)   For point b), we note
that $f$ has exponential decay at infinity, so that it basically has compact support for all practical
purposes. \\

Let us outline an argument to explain why the $\{\phi_{j,i}\}$, as in (\ref{vphkq}), associated to
$f(s) = se^{-s}$ are extremely close to being a tight frame (for suitable $l(j)$).
It would not
be at all difficult to make this argument rigorous.  (In the terminology above,
we shall restrict attention to ${\cal M} = \sqrt{\Delta}$.)  We will pay a price
in that $l(j)$ will need to increase with $j$, ever so slightly faster than in
\cite{narc1}, \cite{narc2}.

Before we begin this argument, it is best to give some elementary estimates that we shall need.
First, in (\ref{cald}) and (\ref{daubest}), with $f(t) = te^{-t}$, note that $c=\int_0^{\infty} te^{-2t}dt
= 1/4$.  Also $te^{-2t}$ is decreasing for $t > 1/2$, and if $M > 0$,
\begin{equation}\notag
\int_{M}^{\infty} te^{-2t} dt = e^{-2M}\left(\frac{M}{2}+\frac{1}{4}\right).
\end{equation}
On the other hand, if $C, b > 0$ and $g$ is a continuous function such that
$|g(x)|^2/x$ is decreasing on $[C,\infty)$, then by the method of the integral test,
$$\sum_{\{j: a^{2(j-1)} > C/b\}} (|g(ba^{2j})|^2/a^{2j})(a^{2j}-a^{2(j-1)}) \leq \int_C^{\infty} (|g(t)|^2/t) dt.$$
Applying this to $g(s) = se^{-s}$ we find that
\begin{equation}
\label{intminf1}
\sum_{\{j : a^{2(j-1)} > M/b \}} |f(ba^{2j})|^2 \leq \frac{a^2}{a^2-1}e^{-2M}\left(\frac{M}{2}+\frac{1}{4}\right),
\end{equation}
if $M > 1/2$.\\

Seoondly, for $j \in \ZZ$, let $j_- = \max(-j,0)$.  Say $a > 1$, $N, l, r \geq 1$; we shall need
a very crude estimate for the solution $m=m(N,r,l)$ of $a^{2m}l(l+n-1) = N+rm_-$. (Note that the left side increases as
$m$ increases, with range $(0,\infty)$, while the right side is positive and nonincreasing as $m$ increases; so there is
one and only one solution.)  Let $p =
\frac{1}{2}\log_a(N/[l(l+n-1)])$.  If $l(l+n-1) < N$, then $m$ must be positive, and hence equal to $p$.
If $l(l+n-1) \geq N$, then $m$ must be nonpositive.
In that situation, $a^{2p}l(l+n-1) = N \leq N+rp_-$,
so $m \geq p$.  On the other hand, let $q= \frac{1}{2}\log_a(rN/l) = \frac{1}{2}[\log_a(rN)-\log_a l]$,
so that $q_- \leq \frac{1}{2}\log_a l$.  Since $xy \geq x+y$ if $x,y \geq 2$,
we have $$a^{2q}[l(l+n-1)] = rN(l+n-1) \geq N + r(l+n-1) \geq N + r\log_a l \geq N+rq_-;$$ so
$m \leq q$.  Altogether, we always have
\begin{equation}
\label{mnrlest}
\frac{1}{2}\log_a(N/[l(l+n-1)]) \leq m(N,r,l) \leq \frac{1}{2}\log_a(rN/l).
\end{equation}

We now present our argument.
Since, in (\ref{daubest}), $c = 1/4$, we may choose $a > 1$
sufficiently close to $1$ that $A_a > \frac{1-\epsilon_1}{8\log a}$,
$B_a < \frac{1+\epsilon_1}{8\log a}$, where $\epsilon_1 << 1$.  Say again that $(I-P_0)F=F$.  In place of
(\ref{nar1}) we have that for some $\eta_1$ with $|\eta_1| < \epsilon_1$,
\begin{equation}\notag
\left(\frac{1+\eta_1}{8 \log a}\|F\|_2^2\right)^{1/2} =
\left(\langle \sum_{j=-\infty}^{\infty} |f(a^{2j}\Delta)|^2 F,F \rangle\right)^{1/2}
= \left(\sum_{j=-\infty}^{\infty} \|f(a^{2j}\Delta) F\|^2_2\right)^{1/2}.
\end{equation}
We repeat that $a$ has now been fixed; we next need to
choose $N > 1/2$ large (how large will be explained in the following argument).  For $j \in \ZZ$, let $j_- = \max(-j,0)$,
$r=\max(1,2(n+2)\ln a)$, and set $f_{1,j} = \chi_{[0,Na^2+ra^2(j-1)_-]}f$,
$f_{2,j} = f - f_{1,j}$.
Since an $\ell^2$ norm is a norm, we can write
\[ \left(\sum_{j=-\infty}^{\infty} \|f(a^{2j}\Delta) F\|^2_2\right)^{1/2}
= \left(\sum_{j=-\infty}^{\infty} \|f_{1,j}(a^{2j}\Delta) F\|^2_2\right)^{1/2} + E_1, \]
where $E_1 \leq \left(\sum_{j=-\infty}^{\infty} \|f_{2,j}(a^{2j}\Delta) F\|^2_2\right)^{1/2} := {\cal A}(F)$, say.
Since $f_{1,j}(s)$ vanishes for $s > Na^2+ra^2(j-1)_-$,\\
$f_{1,j}(a^{2j}l(l+n-1))=0$ if $l > \sqrt{N+r(j-1)_-}/a^{j-1}$; let
$l(j)$ be the least integer greater than $\sqrt{N+r(j-1)_-}/a^{j-1}$.  Thus
\[ \left(\sum_{j=-\infty}^{\infty} \|f_{1,j}(a^{2j}{\Delta}) F\|^2_2\right)^{1/2} =
\left(\sum_{j=-\infty}^{\infty} \sum_i \lambda_{2l(j),i} |f_{1,j}(a^{2j}{\Delta}) F|^2(x_{2l(j),i})\right)^{1/2}\]

Letting $K_t(x,y)$ be the kernel of $f(t^2\Delta)$, we see that this equals

\[ \left(\sum_{j=-\infty}^{\infty} \sum_i \lambda_{2l(j),i} |f(a^{2j}{\Delta}) F|^2(x_{2l(j),i})\right)^{1/2} + E_2
= \left(\sum_{j=-\infty}^{\infty} \sum_i |\langle F,\phi_{j,i} \rangle|^2\right)^{1/2} + E_2, \]
where $\phi_{j,i}(y) = \sqrt{\lambda_{2l(j),i}}\: \overline{K}_{a^j}(x_{j,i},y)$, and
$E_2 \leq [\sum_{j=-\infty}^{\infty} \sum_i \lambda_{2l(j),i} |f_{2,j}(a^{2j}{\Delta}) F|^2(x_{2l(j),i})]^{1/2}
:= {\cal B}(F)$, say.
If we can show that (for $N$ sufficiently large) that
${\cal A}(F), {\cal B}(F) < \epsilon_2 \|F\|$, where $\epsilon_2 << 1$,
then by combining the above facts we find that
\[ \frac{1+\eta_2}{8 \log a}\|F\|_2^2 = \sum_{j=-\infty}^{\infty} \sum_i |\langle F,\phi_{j,i} \rangle|^2\]
for some $\eta_2 << 1$, as we wanted.  But
\[ {\cal A}(F)^2 = \langle \sum_{j=-\infty}^{\infty} |f_{2,j}(a^{2j}\Delta)|^2 F,F \rangle
\leq \epsilon_3 \|F\|^2\]
if
$$\epsilon_3 = \max_{s > 0}\sum_{j=-\infty}^{\infty} |f_{2,j}(a^{2j}s)|^2 \leq
\max_{s > 0} \sum_{a^{2j}>Na^2/s} |f(a^{2j}s)|^2 < Ce^{-N}N << 1$$
if $N$ is sufficiently large.  (We have used (\ref{intminf1}) and recalled that $N > 1/2$.)
 As for ${\cal B}(F)$, denote the kernel of
$f_{2,j}(t^2{\Delta})$ by $J_t(x,y) = J_t^x(y)$.  Then for any $x \in S^n$,
$|f_{2,j}(a^{2j}{\Delta}) F|^2(x)] \leq \|J_{a^j}^x\|_2^2\|F\|_2^2$.  For any orthogonal transformation $T$,
$J_t(Tx,Ty) = J_t(x,y)$ for all $x,y$; accordingly $\|J_{a^j}^x\|_2^2$ is {\em independent} of $x$, so it
equals $$\omega_n^{-1}\int_{S^n}\|J_{a^j}^{x}\|_2^2 dS(x) =
\omega_n^{-1}\|f_{2,j}(a^{2j}{\Delta})\|_2^2,$$
 where the last $\|\:\|_2$ denotes
Hilbert-Schmidt norm.  On the other hand, for any $j$, $\sum_i \lambda_{2l(j),i} =
\sum_i \lambda_{2l(j),i}1 = \omega_n$, since $1 \in {\cal P}_{l(j)}{\cal H}$.  Putting these facts
together we see that ${\cal B}(F)^2 \leq \epsilon_4 \|F\|^2$, where

\begin{align}\notag
 \epsilon_4  = \sum_{j=-\infty}^{\infty} \|f_{2,j}(a^{2j}{\Delta})\|_2^2
 &  =
\sum_{j=-\infty}^{\infty} \sum_{l=1}^{\infty}|f_{2,j}(a^{2j}l(l+n-1))|^2 \dim{\cal H}_l \\\notag&\leq
C\sum_{l=1}^{\infty} l^n \sum_{j=-\infty}^{\infty} |f_{2,j}(a^{2j}l(l+n-1))|^2\\\notag
&
\leq C\sum_{l=1}^{\infty} l^n \sum_{a^{2j} > (Na^2+ra^2(j-1)_-)/[l(l+n-1)]} |f(a^{2j}l(l+n-1))|^2.\notag
\end{align}
In the inner summation, $a^{2(j-1)}l(l+n-1) >  N + r(j-1)_-$, so $j-1 > m(N,r,l)$ (notation as in
(\ref{mnrlest})).  Accordingly, in the inner summation, if we set $m = m(N,r,l)$, then
\begin{equation}
\label{a2jgeq}
a^{2(j-1)}l(l+n-1) > a^{2m}l(l+n-1) = N + rm_-.
\end{equation}
Note also that, by (\ref{mnrlest}), $N+rm_- \geq N+rc(N,l)$,
where we set

\begin{align}\notag
c(N,l) =\begin{cases}
 \frac{1}{2}\left(\log_a l - \log_a rN\right) &\text{if }
   l(l+n-1) \geq N ,\\
\notag
 0 &  \text{otherwise}.
\end{cases}
\end{align}
  By
(\ref{intminf1}) and (\ref{a2jgeq}), then,  $\epsilon_4 \leq C\sum_{l=1}^{\infty} l^n e^{-N-rc(N,l)}(N+r[m(N,r,l)]_-)$.
Since $exp(-\frac{r}{2}\log_a l) \leq a^{-(n+2)\log_a l} = l^{-n-2}$, we easily find (by (\ref{mnrlest})) that
$$\epsilon_4 \leq Ce^{-N}N^{n+3} << 1,$$
as desired.

\end{document}